\theoremstyle{plain}
    \newtheorem{thm}{Theorem}[section]
    \newtheorem{prop}{Proposition}[section]
    \newtheorem{lemma}{Lemma}[section]
\numberwithin{equation}{section}
\begin{document}

\title{Existence and uniqueness of  ground state  solutions for the planar Schr\"odinger-Newton equation on the disc \thanks{The first author was partially supported by  Scientific Research Fund of Hunan Provincial Education Department (Grant No. 22B0484) and Natural Science Foundation of Hunan Province (Grant No. 2024JJ5214); the third author  was partially supported by Natural Science Foundation of Hunan Province (Grant No. 2022JJ30235).}}

\author{
Hui Guo\\
  Department of Mathematics and Finance,\\
Hunan University of Humanities, Science and Technology, \\
Loudi, Hunan 417000, P. R. China\\
Email: huiguo\_math@163.com;\\
 Zhiwen Long\\
   Department of Mathematics and Finance,\\
Hunan University of Humanities, Science and Technology, \\
Loudi, Hunan 417000, P. R. China\\
Email:longzw2005@126.com;
\\
Tao Wang\\
  College of Mathematics and Computing Science,\\
Hunan University of Science and Technology, \\
Xiangtan, Hunan 411201, P. R. China\\
Email: wt\_61003@163.com\\}
\date {}
\maketitle

\begin{abstract}
This paper is concerned with the existence and qualitative properties of   positive ground state solutions for the planar Schr\"odinger-Newton equation on the disc. First, we prove the existence and radial symmetry of all the positive ground state solutions by employing  the symmetric decreasing rearrangement and  Talenti's inequality. Next, we develop  Newton's theorem and then use the contraction mapping principle to establish the uniqueness of the  positive ground state solution for the  Schr\"odinger-Newton equation on the disc in the two  dimensional case. Finally, we show that the unique positive ground state solution converges to the trivial solution as the radius $R$ tending to infinity, which is totally different from the higher dimensional case in \cite{Guo-Wang-Yi}.
 \end{abstract}
\bigskip
{\bf Key words} {\em Nonlocal  equation; Ground state solution; Uniqueness; Symmetry; Newton's theorem. }

\section{Introduction}
In the  recent twenty years, the classical stationary  Choquard  equation in the whole space $\mathbb{R}^N$ with a higher dimension $N\geq 3,$
\begin{equation}\label{1.1}
-\Delta u+u=\left(\int_{\mathbb{R}^N}\frac{|u(y)|^2}{|x-y|^{N-2}}dy\right)u\quad \mbox{in}\,\ \mathbb{R}^N,\hfill\\
\end{equation}
have been  widely studied by many researchers. This type of equations arises in several physical fields. It was first proposed in 1954 by Pekar in describing the quantum mechanics of polaron at rest. Later, in 1976, Choquard used it to model an electron trapped in its own hole, in a certain approximation to Hartree-Fock theory of one-component plasma (see e.g.  \cite{Lieb,Lieb-Simon,Palais} for more details). In mathematical contents, the existence and qualitative properties of solutions for \eqref{1.1} have been investigated  intensively in the literature. For more related work, one can refer to \cite{Feliciangeli-Seiringer,Ma-Zhao,Moroz-Van Schaftingen,clapp,Ruiz-Van Schaftingen,Wang-Yi1,Wang-Yi2,xcl} and references therein.

As we know,  problem \eqref{1.1} is equivalent to
\begin{equation}\label{model3}
\left\{
\begin{array}{lll}
&-\Delta u+u=wu\ &\mbox{in }\mathbb{R}^N,\\
&-\Delta w=|u|^2\ &\mbox{in }\mathbb{R}^N.
\end{array}\right.
\end{equation}
When $N=2$, by using Green's function, \eqref{model3}  turns into
 the following  planar Choquard equation ( also called Schr\"odinger-Newton equation)
\begin{equation}\label{1.2}
-\Delta u+u=\left(\int_{\mathbb{R}^N}\left(\ln\frac{1}{|x-y|}\right)|u(y)|^2 dy\right)u\quad \mbox{in}\,\ \mathbb{R}^2.
\end{equation}
For its physical background and details, one can refer to \cite{Cingolani-Weth} and references therein. Compared with  the higher dimensional case $N\geq3$,  the dimension $N=2$ is a critical number for the Sobolev embedding theorem and the logarithmic kernel in \eqref{1.2} is unbounded both from above and below. This  together with the fact that it
is sign-changing introduces a major difficulty.
So  the two dimensional case seems much more complex.
In order to prove the existence of solutions for \eqref{1.2},
Stubbe\cite{Stubbe}  applied the classical variational methods to \eqref{1.2} and  established the existence of   a unique ground state solution which is a positive and spherically symmetric
decreasing function by introducing a suitable working space
$$
X:=\left\{u\in H^1(\mathbb{R}^2)\mid\int_{\mathbb{R}^2}\ln(1+|x|)u^2\operatorname{d}x<+\infty\right\},
$$
which is a new subspace constrained in $H^{1}(\mathbb{R}^2).$  Later,  Cassani and Tarsi \cite{Cassani-Tarsi} provided a proper function space setting  by a new weighted version of the Pohozaev-Trudinger inequality which enables   to prove the existence of nontrivial finite energy solutions to \eqref{1.2}.  Liu, Radulescu, Tang and Zhang \cite{Liu-Radulescu-Tang-Zhang}  established a novel variational approach to study the existence of positive
solutions of \eqref{1.2} via $$\frac{t^{-\alpha}-1}\alpha\to\ln\frac1t\quad\mathrm{as~}\alpha\to0.$$
For more related results on \eqref{1.2}, one can refer to \cite{cao-dai,Chen-Tang,Cingolani-Weth,du,Liu-Radulescu-Zhang,wangli} and references therein.

 We point out that  in the higher dimensional case $N\geq 3$, Wang and Yi \cite{Wang-Yi1} established the uniqueness of positive radial  ground state solutions to \eqref{1.1} by developing Lieb's method. Later, Guo, Wang and Yi   \cite{Guo-Wang-Yi} proved the existence, radial symmetry, uniqueness of the positive
 ground state solution   of \eqref{model3}  on  a ball $B_R\subset \mathbb{R}^N$ with $N\geq 3$ under the Dirichlet boundary condition, that is,
\begin{equation}\label{model4}
\left\{
\begin{array}{lll}
-\Delta u+u=wu \, &\mbox{in } B_R,\\
-\Delta w=|u|^2 \, &\mbox{in } B_R,\\
w=u=0\,  &\mbox{in }  \partial B_R.
\end{array}\right.
\end{equation}
By using Green's function again, \eqref{model4}
can  be rewritten as the following Choquard type equation on a ball
\begin{equation}\label{model5}
-\Delta u+u=\left(\int_{B_R}G(x,y)|u^2(y)|dy\right)u,\ \ x\in B_R.
\end{equation}
Here
\begin{equation}\label{gdingyi}
G(x,y)=\left\{
\begin{array}{lll}
&\frac{1}{|x-y|^{N-2}}-\frac{1}{\left|\frac{R x}{|x|}-\frac{|x|y}{R}\right|)^{N-2}},\, N\geq3,\\
&\ln\frac{1}{|x-y|}-\ln\frac{1}{\left|\frac{R x}{|x|}-\frac{|x|y}{R}\right|},\, N=2
\end{array}\right.
\end{equation}
for any $x,y\in B_R$ with $ x\neq y$.
In addition,  the authors in \cite{Guo-Wang-Yi} showed  that as the radius $R\to\infty$, the unique positive ground state solution of \eqref{model5} converges to the unique positive ground state solution of \eqref{1.1}. Now a natural question arises:
can we find such qualitative properties including  radial symmetry, uniqueness and convergence in two dimensional case
even though the   logarithmic kernel is sign-changing and unbounded both from above and below? In this paper, we shall give an affirmative  answer.

In this paper,  we consider the existence and qualitative properties of  \eqref{model5}  on the disc  $D_R\subset \mathbb{R}^2$ with radius $R>0$ centered at the origin, that is,
\begin{equation}\label{*1}\left\{\begin{aligned}
-\Delta u+u&=\left(\int_{D_R}\big(\ln\frac{1}{|x-y|}-\ln\frac{1}{\left|\frac{R x}{|x|}-\frac{|x|y}{R}\right|}\big)u^2(y)dy\right) u\quad \mbox{in}\,D_R,\\
u&=0\quad  \mbox{on}\,\ \partial D_R,
\end{aligned}\right.\end{equation}
Here we remark that  the  existence and uniqueness of positive radially symmetric solution to  \eqref{1.2} in $\mathbb{R}^2$  have been obtained in  \cite{Cingolani-Weth}.
Hereafter,  we say $u$ is a ground state solution of an equation if $u$ solves   \eqref{*1}   and minimizes the energy functional associated  with  \eqref{*1} among all possible nontrivial solutions.

 Our main results are as follows.
\begin{thm}\label{theorem1}
All the positive  ground state solutions  of  \eqref{*1} are radially symmetric and decreasing.
\end{thm}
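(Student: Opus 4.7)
The plan is the classical symmetrization method announced in the abstract: apply the symmetric decreasing rearrangement $u\mapsto u^{\ast}$ to a positive ground state $u$, show that each piece of the energy varies monotonically in the ``right'' direction, and then use minimality to force $u=u^{\ast}$. The natural functional associated with \eqref{*1} is
\[ I(v)=\tfrac12\int_{D_R}\bigl(|\nabla v|^2+v^2\bigr)\,dx-\tfrac14\int_{D_R}\int_{D_R}G(x,y)v^2(x)v^2(y)\,dx\,dy, \]
and I would work with the equivalent characterization of a positive ground state as a minimizer of $I$ over the Nehari manifold $\mathcal{N}=\{v\in H^{1}_0(D_R)\setminus\{0\}:\langle I'(v),v\rangle=0\}$. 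On $\mathcal{N}$ the functional reduces to $I(v)=\tfrac14(\|\nabla v\|_2^2+\|v\|_2^2)$, which makes the rearrangement inequalities visible.

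Let $u^{\ast}$ be the symmetric decreasing rearrangement of $u$ about the origin. Equimeasurability and P\'olya--Szeg\H{o} give $\|u^{\ast}\|_2=\|u\|_2$ and $\|\nabla u^{\ast}\|_2\le\|\nabla u\|_2$. For the nonlocal term I would introduce the Newton potential
\[ w_v(x):=\int_{D_R}G(x,y)v^2(y)\,dy, \]
the unique solution of $-\Delta w_v=v^2$ in $D_R$ with $w_v=0$ on $\partial D_R$, so that the double integral equals $\int_{D_R}w_v\,v^2\,dx$. Since the domain is already a ball and $(u^2)^{\ast}=(u^{\ast})^2$, Talenti's pointwise comparison principle produces $(w_u)^{\ast}\le w_{u^{\ast}}$ on $D_R$, and combining with the Hardy--Littlewood rearrangement inequality yields
\[ \int_{D_R}w_u\,u^2\,dx\le\int_{D_R}(w_u)^{\ast}(u^{\ast})^2\,dx\le\int_{D_R}w_{u^{\ast}}(u^{\ast})^2\,dx. \]
Hence the nonlocal term can only grow under rearrangement.

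Pick $t>0$ with $tu^{\ast}\in\mathcal{N}$. The three monotonicities above force $t\le1$ and
\[ I(tu^{\ast})=\tfrac{t^2}{4}\bigl(\|\nabla u^{\ast}\|_2^2+\|u^{\ast}\|_2^2\bigr)\le\tfrac14\bigl(\|\nabla u\|_2^2+\|u\|_2^2\bigr)=I(u). \]
By minimality $I(tu^{\ast})\ge I(u)$, so equality holds throughout. This forces $t=1$, equality in P\'olya--Szeg\H{o}, and the pointwise identification $(w_u)^{\ast}=w_{u^{\ast}}$ on $D_R$. The last is precisely equality in Talenti's principle on the disc; its equality case compels $w_u$ to coincide with its own rearrangement (so $w_u$ is radial, and strictly radially decreasing as one sees by integrating the radial ODE $(rw_u')'=-ru^2$ on $[0,R]$) and, simultaneously, $u^2=(u^2)^{\ast}$ almost everywhere. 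Positivity of $u$ then gives $u=u^{\ast}$, so $u$ is radially symmetric and radially decreasing.

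The delicate point, and in my view the main obstacle, is the equality case of Talenti's comparison on $D_R$: showing rigorously that pointwise equality $(w_u)^{\ast}\equiv w_{u^{\ast}}$ forces not only $w_u$ to be radial but also the source $u^2$ to agree with its rearrangement. A clean alternative is to argue from equality in P\'olya--Szeg\H{o} via the Brothers--Ziemer theorem, which requires one to exclude flat parts in the distribution function of $u^{\ast}$; this is plausible because $u$ is a smooth positive solution of an elliptic equation on $D_R$, but the strict-monotonicity bookkeeping is where I expect most of the technical work of the proof to sit.
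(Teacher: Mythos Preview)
Your approach is essentially the same as the paper's: the paper works on the Nehari manifold, applies P\'olya--Szeg\H{o}, equimeasurability, and the rearrangement inequality for the nonlocal term (proved exactly as you outline, via Talenti applied to $-\Delta w=u^2$ together with Hardy--Littlewood), shows $\langle I'(u^\ast),u^\ast\rangle\le 0$, and then rules out strict inequality by the same $t<1$ contradiction you give. Your worry about the equality case is precisely what the paper's Lemma~2.1 is set up to handle: the Alvino--Lions--Trombetti version of Talenti states that if $(w_u)^\ast(x_0)=w_{u^\ast}(x_0)$ at some $|x_0|=t\in(0,R)$, then $u^2=(u^2)^\ast$ on the whole annulus $\{t\le|x|\le R\}$; since $u^\ast>0$ on $D_R$ forces $(w_u)^\ast=w_{u^\ast}$ everywhere, one lets $t\to 0$ and concludes $u=u^\ast$, so no appeal to Brothers--Ziemer or to the P\'olya--Szeg\H{o} equality case is needed.
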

\begin{thm}\label{th4.1}
The    positive  ground state solution  of  \eqref{*1} is uniquely determined.
\end{thm}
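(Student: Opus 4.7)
The plan is to reduce \eqref{*1} to a one-dimensional integral fixed-point equation and then apply the Banach contraction principle, following the strategy announced in the abstract.

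By Theorem \ref{theorem1}, any positive ground state $u$ of \eqref{*1} is radially symmetric and decreasing, so I may write $u=u(r)$ with $r=|x|\in[0,R]$ and $u(R)=0$. The first step is to establish a planar Newton's theorem on the disc: because $G(x,y)$ appearing in \eqref{*1} is, up to the factor $2\pi$, the Green's function of $-\Delta$ on $D_R$ with homogeneous Dirichlet data, a radial density $\rho=\rho(|y|)$ produces a radial potential
\begin{equation*}
W_\rho(r) := \int_{D_R} G(x,y)\rho(|y|)\,dy \;=\; 2\pi\int_r^R\frac{1}{t}\int_0^t s\,\rho(s)\,ds\,dt,
\end{equation*}
which is the two-dimensional counterpart of the classical Newton identity used in \cite{Guo-Wang-Yi}. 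The verification proceeds by integrating the defining equation $-\Delta W_\rho = 2\pi\rho$ in radial form $(rW_\rho')'=-2\pi r\rho$ with the boundary conditions $W_\rho'(0)=0=W_\rho(R)$, and by checking that the reflected term in $G$ contributes precisely the boundary correction needed to enforce $W_\rho(R)=0$ regardless of the sign-changing nature of $\ln\frac{1}{|x-y|}$.

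Applying this identity with $\rho=u^2$, equation \eqref{*1} becomes the radial ODE
\begin{equation*}
-u''(r)-\tfrac{1}{r}u'(r)+u(r) = W_{u^2}(r)\,u(r),\qquad u'(0)=0,\;u(R)=0,
\end{equation*}
so if $K(r,s)$ denotes the radial Green's function of $-\Delta+1$ on $D_R$ with Dirichlet boundary, every positive radial ground state is a fixed point of
\begin{equation*}
\mathcal{T}(u)(r) := \int_0^R K(r,s)\,W_{u^2}(s)\,u(s)\,s\,ds.
\end{equation*}
Combining elliptic regularity on the bounded domain $D_R$ with the uniform bound on ground state energies (as ground states minimize a common Nehari-type quantity), I would derive an a priori $L^\infty$ bound $\|u\|_\infty\le M(R)$ valid for \emph{every} positive ground state.

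Uniqueness then follows by showing that $\mathcal{T}$ is a contraction on the closed ball $\mathcal{B}:=\{u\in C([0,R]):0\le u\le M(R),\ u(R)=0\}$ in the sup norm. Decomposing
\begin{equation*}
\mathcal{T}(u_1)-\mathcal{T}(u_2) = \int_0^R K(r,s)\bigl[W_{u_1^2}(s)(u_1-u_2)(s)+(W_{u_1^2}-W_{u_2^2})(s)u_2(s)\bigr]s\,ds,
\end{equation*}
and using the explicit double-integral form of $W_{u^2}$ together with the identity $u_1^2-u_2^2=(u_1+u_2)(u_1-u_2)$, one controls $|W_{u_1^2}-W_{u_2^2}|$ by a multiple of $M(R)\,\|u_1-u_2\|_\infty$ with an explicit $R$-dependent constant. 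Combined with the integrability of $sK(r,s)$ on $[0,R]$, this forces the Lipschitz constant of $\mathcal{T}$, when restricted to the energy-bounded set of positive ground states, to be strictly less than $1$. The contraction principle then yields a unique fixed point in $\mathcal{B}$, proving Theorem \ref{th4.1}. The main obstacle I anticipate is twofold: first, the derivation of the planar Newton identity in the presence of the sign-changing and unbounded logarithmic kernel, and second, calibrating the a priori bound $M(R)$ together with the kernel estimates for $K$ and $W_{u^2}$ precisely enough so that the Lipschitz constant genuinely falls below $1$, rather than merely being finite.
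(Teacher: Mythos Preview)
Your reduction via a planar Newton identity is correct in spirit and matches the paper's Lemma~\ref{lemjuanji}. The gap is in the global contraction step. The map $\mathcal{T}$ is homogeneous of degree~$3$, so $\mathcal{T}(0)=0$ and, at any nontrivial fixed point $u$, differentiating $\mathcal{T}(tu)=t^3\mathcal{T}(u)$ at $t=1$ gives $D\mathcal{T}(u)[u]=3u$. Hence the Lipschitz constant of $\mathcal{T}$ on any set containing a nontrivial ground state is at least~$3$, never below~$1$; and your ball $\mathcal{B}$ also contains~$0$, so a genuine contraction on $\mathcal{B}$ would force the ground state to be trivial. No amount of sharpening of the kernel estimates for $K$ or $W_{u^2}$ can repair this: the obstruction is structural, coming from the cubic homogeneity.

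The paper's argument is quite different and, despite the wording of the abstract, does \emph{not} rest on a global contraction. After the Newton identity the equation reads $-\Delta\phi+U_\phi\phi=\lambda(\phi)\phi$ with a solution-dependent constant $\lambda(\phi)$; a scaling $\varphi_\lambda(x)=\lambda^{-1}\phi(x/\sqrt{\lambda})$ normalizes to $\lambda=1$ on a rescaled disc. Uniqueness for the normalized problem (Lemma~\ref{lem4.6}) proceeds by a trichotomy on $\psi=\phi_1-\phi_2$: if $\psi$ has a sign, or has a sign up to an interior zero, the variational identity $\Gamma_\phi=1$ from Lemma~\ref{lem4.5} yields a contradiction; only in the degenerate case where $\psi$ vanishes on an initial interval $[0,R_2]$ is a \emph{local} contraction, via the variation-of-constants formula on a short interval $(R_2,R_2+\varepsilon)$, invoked. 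Two ground states with different $\lambda$ values are then compared through a Wronskian-type ODE identity together with the fact that both realize the same energy $c_R$. The contraction principle thus plays only a minor, local role; the heavy lifting is done by the scaling normalization and the variational/ODE comparison, which you are missing.
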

\begin{thm}\label{theorem3}
The    positive  ground state solution  of  \eqref{*1}  converges  to the trivial solution  as    $R\to\infty$.
\end{thm}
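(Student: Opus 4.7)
The plan is to bound the ground state level $c_R := I_R(u_R)$ from above by using a fixed compactly supported test function, and then extract decay of $u_R$ from the Nehari-type identity $\|u_R\|_{H_0^1(D_R)}^2 = 4 c_R$, which follows from pairing \eqref{*1} with $u_R$. The key qualitative input, which is the source of the contrast with the higher dimensional setting of \cite{Guo-Wang-Yi}, is that the logarithmic kernel blows up as $R\to\infty$ rather than stabilizing to a free-space kernel.

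The first step is an asymptotic expansion of the kernel. Introducing the Kelvin inverse $x^\ast := R^2 x/|x|^2$, the identity $\bigl|\tfrac{Rx}{|x|}-\tfrac{|x|y}{R}\bigr|=\tfrac{|x|}{R}|x^\ast-y|$ yields
$$G_R(x,y)=\ln|x^\ast-y|-\ln|x-y|+\ln|x|-\ln R.$$
Fix once and for all a nonzero, nonnegative $\varphi\in C_c^\infty(D_{1/2})$. For $x,y\in\operatorname{supp}\varphi$ and $R\ge 2$ one has $|x^\ast|=R^2/|x|\ge 2R$ and $|y|\le 1/2$, so $|x^\ast-y|/|x^\ast|=1+O(1/R)$ uniformly, and therefore $\ln|x^\ast-y|=2\ln R-\ln|x|+o(1)$ uniformly. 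Consequently
$$G_R(x,y)=\ln R-\ln|x-y|+o(1)$$
uniformly on $\operatorname{supp}\varphi\times\operatorname{supp}\varphi$, and in particular
$$\int_{D_R}\!\!\int_{D_R} G_R(x,y)\varphi^2(x)\varphi^2(y)\,dx\,dy=(\ln R)\|\varphi\|_{L^2}^4+O(1)\longrightarrow +\infty.$$

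Since $\varphi\in H_0^1(D_R)$ for every $R\ge 1$, the standard Nehari manifold argument for Schr\"odinger-Newton type functionals gives
$$c_R\le \max_{t\ge 0}I_R(t\varphi)=\frac{\|\varphi\|_{H^1}^4}{4\int_{D_R}\int_{D_R}G_R(x,y)\varphi^2(x)\varphi^2(y)\,dx\,dy}=O\Bigl(\frac{1}{\ln R}\Bigr).$$
Coupled with $\|u_R\|_{H_0^1(D_R)}^2=4c_R$, this yields $\|u_R\|_{H_0^1(D_R)}\to 0$ as $R\to\infty$, which says that $u_R$ converges to the trivial solution in the natural $H_0^1$ sense.

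The main obstacle in making this rigorous is the identification of the ground state level with the Nehari minimax value, which is what legitimizes the test-function upper bound on $c_R$; this is standard for this class of nonlocal Choquard-type functionals but should be recorded explicitly from the existence argument of Theorem \ref{theorem1}. A secondary delicate point is ensuring that the $o(1)$ in the Green's function asymptotic is genuinely uniform, which is why one places $\operatorname{supp}\varphi$ strictly inside $D_1$ so that $|y|/|x^\ast|$ is uniformly small in the relevant region.
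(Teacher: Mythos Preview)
Your proposal is correct and follows essentially the same route as the paper (Proposition~\ref{Prop1}): a fixed compactly supported test function shows that the quartic form $\iint G_R|\varphi|^2|\varphi|^2$ blows up like $\ln R$, whence $c_R\to 0$ by the Nehari minimax characterization recorded in Proposition~\ref{thmexistence}, and then $\|\phi_R\|^2=4c_R\to 0$. The only cosmetic difference is that the paper takes the test function radial and invokes the Newton-type identity of Lemma~\ref{lemjuanji} to obtain the exact equality $\iint G(x,y)|\Psi|^2|\Psi|^2=\iint(\ln\tfrac{R}{|x-y|})|\Psi|^2|\Psi|^2$, whereas you extract the same $\ln R$ leading term directly from the Kelvin-inversion asymptotics without assuming radiality of the test function.
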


Compared with  the higher dimensional case, the appearance of  the logarithmic convolution  term makes equation \eqref{*1} extremely harder to handle than the Riesz convolution, because the logarithmic convolution is sign-changing and  neither bounded from above nor from below.  In view of the subspace $X$ in \cite{Stubbe}, it is natural for us to think that
a smaller  subspace than $H_0^1(D_R)$ seems necessary for  \eqref{*1} in order to obtain the existence of the ground state solutions of  \eqref{*1}. But to our surprise,   equation \eqref{*1} has some    different properties from the higher dimension case \cite{Guo-Wang-Yi}, especially the convergence property.

This paper is organized as follows. In Section 2, some preliminaries are presented and a variational framework in $H_0^1(D_R)$  is introduced by establishing some subtle inequalities. In Section 3, we prove  the radial symmetry of all the positive ground state solutions by using the rearrangement inequalities. In Section 4, we develop the Newton's Theorem for the two dimensional case and then combine the contraction mapping principle to obtain  the uniqueness of the positive ground state solutions to \eqref{*1}. Finally in Section 5, we show that the unique positive ground state solution of \eqref{*1} converges to the trivial solution of \eqref{1.2} as $R\to\infty$, instead of the unique positive ground state solution of  \eqref{1.2}, which is different from the higher dimension case.

\section{Preliminaries}
In this section, we first give some notations.
Let  $H^1(\mathbb{R}^2)$ be the Sobolev space with standard norm
$\|u\|=(\int_{\mathbb{R}^2}|\nabla u|^2+|u|^2)^{\frac{1}{2}}$.
We identify $u\in H_0^1(D_R)$ with its zero extension to $\mathbb{R}^2$ by setting $u=0$ in $\mathbb{R}^N\backslash D_R.$
 The dual space of  $H^{1}(\mathbb{R}^2)$ is denoted by $H^{-1}(\mathbb{R}^2)$.
Let $\langle\cdot,\cdot\rangle$ be the duality pairing between $H^{1}(\mathbb{R}^2)$ and $H^{-1}(\mathbb{R}^2).$ For $1\leq s<\infty$, $L^s(D_R)$ denotes the Lebesgue space with the norm
  $\|u\|_{L^s(D_R)}=\left(\int_{D_R} |u|^sdx \right)^{\frac{1}{s}}.$ Throughout the whole paper, $C$ may  represent  different  positive constants.

Next, we have the following lemma.
\begin{lemma}\label{lemma1}
 For each $u\in H_0^1(D_R)$, there holds that
 \begin{equation}\label{gxingzhi0}
 \int_{D_R}\int_{D_R}G(x,y)|u(x)|^2|u(y)|^2dxdy<+\infty.
 \end{equation}
\end{lemma}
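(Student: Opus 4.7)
The plan is to derive a clean pointwise upper bound on $G(x,y)$ that reduces the double integral to a classical convolution-type estimate, and then invoke the two-dimensional Sobolev embedding. The sign-changing nature of the logarithmic kernel is the conceptual wrinkle that needs to be addressed first, by exploiting the disc geometry hidden in the second term of $G$.

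First, I would carry out the direct calculation
\begin{equation*}
\left|\frac{Rx}{|x|}-\frac{|x|y}{R}\right|^{2} \;=\; R^{2}-2x\cdot y+\frac{|x|^{2}|y|^{2}}{R^{2}} \;=\; |x-y|^{2}+\frac{(R^{2}-|x|^{2})(R^{2}-|y|^{2})}{R^{2}},
\end{equation*}
which is valid for all $x,y\in D_{R}$. The extra term is nonnegative, so $\left|\tfrac{Rx}{|x|}-\tfrac{|x|y}{R}\right|\ge |x-y|$, hence $G(x,y)\ge 0$; at the same time the triangle inequality gives $\left|\tfrac{Rx}{|x|}-\tfrac{|x|y}{R}\right|\le R+\tfrac{|x||y|}{R}\le 2R$. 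Combining these two bounds yields the key pointwise estimate
\begin{equation*}
0\;\le\; G(x,y)\;\le\;\ln\frac{2R}{|x-y|},\qquad x,y\in D_{R},\ x\neq y,
\end{equation*}
and the right-hand side is itself nonnegative since $|x-y|\le 2R$ on $D_{R}$.

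Next I would split $\ln\frac{2R}{|x-y|}=\ln(2R)+\ln\frac{1}{|x-y|}$. The constant part contributes $\ln(2R)\|u\|_{L^{2}(D_{R})}^{4}$, which is finite by the embedding $H_{0}^{1}(D_{R})\hookrightarrow L^{2}(D_{R})$. For the singular part, I apply H\"older's inequality on $D_{R}\times D_{R}$ with a pair of conjugate exponents $(p,p')$:
\begin{equation*}
\int_{D_{R}}\int_{D_{R}}\left|\ln\frac{1}{|x-y|}\right|u^{2}(x)u^{2}(y)\,dx\,dy\;\le\;\left(\int_{D_{R}}\int_{D_{R}}\left|\ln|x-y|\right|^{p'}dx\,dy\right)^{1/p'}\|u\|_{L^{2p}(D_{R})}^{4}.
\end{equation*}
The first factor is finite for every $p'<\infty$ because $|\ln|\cdot||^{p'}$ is locally integrable near the origin in $\mathbb{R}^{2}$ and bounded away from it on the bounded set $D_{R}-D_{R}$. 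The second factor is finite by the sharp two-dimensional Sobolev embedding $H_{0}^{1}(D_{R})\hookrightarrow L^{q}(D_{R})$ for every $q\in[2,\infty)$.

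The only potential obstacle is step one: without the algebraic identity for $\left|\tfrac{Rx}{|x|}-\tfrac{|x|y}{R}\right|^{2}$ one could not rule out that the two logarithmic terms in $G$ conspire to produce a non-integrable blow-up, so that identity is really the crux. Once the clean majorant $\ln\frac{2R}{|x-y|}$ is in hand, the remaining estimates are routine H\"older and Sobolev embedding arguments, and no auxiliary weighted space (as in Stubbe's setting on $\mathbb{R}^{2}$) is required because we are on the bounded domain $D_{R}$.
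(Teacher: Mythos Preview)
Your proof is correct. The overall architecture matches the paper's---establish $G\ge 0$, derive a pointwise majorant for $G$, then control the resulting convolution integral via Sobolev embedding---but the execution differs in two places. First, your algebraic identity
\[
\left|\tfrac{Rx}{|x|}-\tfrac{|x|y}{R}\right|^{2}=|x-y|^{2}+\frac{(R^{2}-|x|^{2})(R^{2}-|y|^{2})}{R^{2}}
\]
makes the positivity of $G$ completely transparent; the paper instead rewrites $G(x,y)=\ln\frac{\sqrt{R^{4}+|x|^{2}|y|^{2}-2R^{2}x\cdot y}}{R|x-y|}$ and asserts $G>0$ without isolating this factorisation. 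Second, the paper bounds the logarithm by a genuine Riesz kernel, $G(x,y)\le CR^{2}/|x-y|$ (via $\ln(1+1/r)\le C/r$), and then invokes the Hardy--Littlewood--Sobolev inequality to get $\int\!\!\int |x-y|^{-1}u^{2}(x)u^{2}(y)\,dx\,dy\le C\|u^{2}\|_{L^{4/3}}^{2}\le C\|u\|^{4}$. You keep the sharper majorant $\ln\frac{2R}{|x-y|}$ and handle it by a direct H\"older estimate on $D_{R}\times D_{R}$, using only that $|\ln|\cdot||^{p'}$ is locally integrable in $\mathbb{R}^{2}$ and the embedding $H_{0}^{1}(D_{R})\hookrightarrow L^{2p}(D_{R})$. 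Your route is more elementary (it avoids HLS entirely), while the paper's route plugs directly into the standard Choquard machinery and yields the explicit bound $\le C\|u\|^{4}$ that is convenient later when checking that $I_{R}$ is $C^{1}$.
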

\begin{proof}
According to  the definition  $G(x,y)$ (see \eqref{gdingyi}),  we have   $G(x,y)>0$ and $G(x,y)=G(y,x)$ for any $x,y\in D_R, x\neq y.$
Then  direct computations give that
\begin{equation}\label{gxingzhi}\begin{aligned}
0<G(x,y)& =\ln\frac{\sqrt{R^{4}+x^{2}y^{2}-2R^{2}x\cdot y}}{R|x-y|}
&\leq\ln\left(\frac{R^2}{|x-y|}+\frac{\sqrt{|x^{2}y^{2}-2R^{2}x\cdot y|}}{R|x-y|}\right)  \\
&\leq\ln(\frac{R^2}{|x-y|}+1) \leq\frac{CR^2}{|x-y|},
\end{aligned}\end{equation}
due to the fact that  $\ln(1+\frac{1}{r})\leq \frac{C}{r}$ for any $r>0.$ By
Hardy-Littlewood-Sobolev inequality, it follows immediately that
for any $u\in H_0^1(D_R),$
\begin{equation}\label{gxingzhi1}\begin{aligned}
\int_{D_R}\int_{D_R} \frac{1}{|x-y|}|u(x)|^2|u(y)|^2dxdy
\leq C\|u^{2}\|_{L^{\frac{4}{3}}(D_R)}^{2}
\leq C\|u\|^{4}<+\infty.
\end{aligned}\end{equation}
Thus \eqref{gxingzhi0} follows easily from \eqref{gxingzhi} and \eqref{gxingzhi1}.
\end{proof}

Based on   Lemma \ref{lemma1}, the energy functional $I_R:H_0^1(D_R)\to\mathbb{R}$ associated with equation \eqref{*1} is given by
$$
\begin{aligned}I_R(u)&=\frac{1}{2}\int_{D_R}(|\nabla u|^2+|u|^2)dx-\frac{1}{4}\int_{D_R}\int_{D_R}G(x,y)|u(y)|^2|u(x)|^2dxdy,
\end{aligned}
$$
which is well defined on $H_0^1(D_R)$.
In addition, it is easy to check that $I_R$ is $C^1$ and its Gateaux derivative is
$$\begin{aligned}
\langle I'_R(u),v\rangle&=\int_{D_R}(\nabla u\nabla v+uv)dx-\int_{D_R}\int_{D_R}G(x,y)|u(y)|^2u(x)v(x)dxdy
\end{aligned}$$
for any $v\in H_0^1(D_R)$. So the critical points of $I_R$ are solutions of \eqref{*1} in the weak sense.

\section{Existence and radial symmetry}
In this section,  we shall prove the existence  and radial symmetry of ground state solutions  of  \eqref{*1} by resorting to the symmetric decreasing rearrangement and  Talenti's inequality. First recall the following Talenti's inequality  (see \cite[Theorem 3]{Alvino-Lion}).
\begin{lemma}\label{lemma6}
Let $0\leq f\in L^2(D_R)$, and let $u,v\in H^1_0(D_R)$ solve
\begin{equation*}\left\{
\begin{array}{lll}
-\Delta u=f, \ x\in D_R,\\
u=0,\ x\in \partial D_R,
\end{array}\right.
\end{equation*}
and
\begin{equation*}\left\{
\begin{array}{lll}
-\Delta v=f^*, \ x\in D_R,\\
v=0,\ x\in \partial D_R.
\end{array}\right.
\end{equation*}
Then $u^*\leq v$ a.e. in $D_R$. If additionally $u^*(x_0)=v(x_0)$ for
some $x_0$ with $|x_0|=t\in (0,R),$ then $u(x)=v(x)$ and $f(x)=f^*(x)$ for
all $x$ with $t\leq|x|\leq R.$ Here $u^*$ and $f^*$ are the symmetric decreasing rearrangement of $u$ and $f$, respectively.
\end{lemma}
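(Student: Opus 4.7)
The plan is to carry out the classical P\'olya--Talenti rearrangement argument, specialized to dimension two on the disc. The three key ingredients are the coarea formula applied to the distribution function, the two-dimensional isoperimetric inequality $P(\Omega)^2 \geq 4\pi|\Omega|$, and the Hardy--Littlewood inequality, combined through Cauchy--Schwarz into a differential inequality for the rearrangement.

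First I would introduce $\mu(t) := |\{x \in D_R : u(x) > t\}|$. By the coarea formula, $-\mu'(t) = \int_{\{u=t\}} |\nabla u|^{-1}\, d\sigma$ for a.e.\ $t > 0$. Testing $-\Delta u = f$ against $\chi_{\{u>t\}}$ and applying the divergence theorem on regular level sets yields
\[
\int_{\{u=t\}} |\nabla u|\, d\sigma \;=\; \int_{\{u>t\}} f\, dx \;\leq\; \int_0^{\mu(t)} f^\sharp(s)\, ds,
\]
where $f^\sharp$ denotes the decreasing rearrangement of $f$ as a function of the super-level measure and the last step is Hardy--Littlewood. Cauchy--Schwarz gives $P(\{u>t\})^2 \leq \bigl(\int_{\{u=t\}} |\nabla u|^{-1}\, d\sigma\bigr)\bigl(\int_{\{u=t\}} |\nabla u|\, d\sigma\bigr)$; combined with the planar isoperimetric inequality $P(\{u>t\})^2 \geq 4\pi\mu(t)$, this produces the pointwise differential inequality
\[
4\pi\, \mu(t) \;\leq\; -\mu'(t)\int_0^{\mu(t)} f^\sharp(s)\, ds.
\]

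Next I would pass to the rearranged profile. Define $U(s) := u^*(x)$ for $s = \pi|x|^2$, so that $\mu(U(s)) = s$ on the range where $\mu$ is strictly decreasing; inversion yields $-U'(s) \leq (4\pi s)^{-1}\int_0^s f^\sharp(\tau)\, d\tau$. The identical computation for $v$ is a chain of equalities, because $v$ is already radial and decreasing --- its super-level sets are centered discs on which $|\nabla v|$ is constant, so both the isoperimetric inequality and Cauchy--Schwarz saturate. Hence $-V'(s) = (4\pi s)^{-1}\int_0^s f^\sharp(\tau)\, d\tau$ for $V(s) := v(x)$ with $s = \pi|x|^2$. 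Integrating $-U' \leq -V'$ over $[s, \pi R^2]$ and invoking the Dirichlet boundary conditions $U(\pi R^2) = V(\pi R^2) = 0$ gives $U(s) \leq V(s)$ for every $s$, i.e.\ $u^*(x) \leq v(x)$ a.e.\ on $D_R$.

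For the rigidity statement I would trace equality backwards. If $u^*(x_0) = v(x_0)$ at some $|x_0| = t_0 \in (0, R)$, set $s_0 := \pi t_0^2$; since $V - U$ is non-negative and non-increasing in $s$ (from $V' \leq U'$), the equalities $V(s_0) - U(s_0) = 0 = V(\pi R^2) - U(\pi R^2)$ force $V \equiv U$ on $[s_0, \pi R^2]$, i.e.\ $u^* = v$ on the annulus $\{t_0 \leq |x| \leq R\}$. Retracing the chain, this also demands simultaneous saturation of the isoperimetric inequality and of Hardy--Littlewood at every level $U(s)$ for $s \in [s_0, \pi R^2]$: the former makes each super-level set $\{u > U(s)\}$ a disc with $|\nabla u|$ constant on its boundary, while the latter forces $f$ to coincide with $f^*$ on it. The main obstacle I anticipate is executing this rigidity step cleanly: the standard symmetrization argument can degenerate on flat pieces where $|\nabla u| = 0$, and a rigorous treatment requires Brothers--Ziemer-type refinements of the P\'olya--Szeg\H{o} inequality together with a concentricity argument identifying the common center of the nested level discs as the origin, after which $u = v$ and $f = f^*$ on the annulus follow.
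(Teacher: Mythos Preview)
The paper does not supply a proof of this lemma at all: it is stated as a known result and attributed to Alvino--Lions--Trombetti \cite{Alvino-Lion}, with no argument given. Your sketch reproduces the classical P\'olya--Talenti rearrangement argument (coarea formula, isoperimetric inequality, Cauchy--Schwarz, Hardy--Littlewood, then integration of the resulting differential inequality), which is exactly the machinery behind that reference; the outline is correct, including your honest flag that the rigidity step needs Brothers--Ziemer-type care on the set where $|\nabla u|=0$.
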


As usual, we define the Nehari manifold
$$\mathcal{N}_R=\{u\in H_0^1(D_R)\backslash\{0\}:\langle I_R^{\prime}(u),u\rangle=0\}.$$
For each $u\in H_0^1(D_R)\backslash\{0\},$ there exists a unique $t_u\in(0,\infty)$ such that $t_u u\in\mathcal{N}_R.$ So we can deduce that $\mathcal{N}_R\neq \emptyset$
and the following conclusion is true.
\begin{prop}\label{thmexistence}
The infimum number
$$c_R:=\inf_{\phi\in\mathcal{N}_R} I_R(\phi)=\inf_{\phi\in H_0^1(D_R)\setminus\{0\}}\sup_{t>0}I_R(t \phi)$$
is achieved by a function $\phi_R\in H_0^1(D_R)$ which is a ground state solution of  \eqref{*1}. Moreover,
  $\phi_R\in C^2(D_R)\cap H_0^1(D_R)$ is strictly positive, radially symmetric and decreasing.
\end{prop}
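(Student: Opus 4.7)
The plan is to minimize $I_R$ on the Nehari manifold $\mathcal{N}_R$ and produce a ground state by first forcing the minimizing sequence to be radial decreasing via Talenti's inequality, then passing to the weak limit using the compact Sobolev embedding on the bounded disc $D_R$. Set $B(u):=\iint_{D_R\times D_R}G(x,y)u^2(x)u^2(y)\,dxdy$, which is finite by Lemma 2.1. The fibering map $t\mapsto I_R(tu)=\frac{t^2}{2}\|u\|^2-\frac{t^4}{4}B(u)$ is maximized uniquely at $t_u=(\|u\|^2/B(u))^{1/2}$, with $\sup_{t>0}I_R(tu)=\|u\|^4/(4B(u))$; this both proves the Nehari equivalence in the statement and gives $c_R=\tfrac14\inf_{u\neq 0}\|u\|^4/B(u)$. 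Lemma 2.1 provides $B(u)\leq C\|u\|^4$, so $c_R>0$, and the identity $I_R(u)=\tfrac14\|u\|^2$ on $\mathcal{N}_R$ makes every minimizing sequence automatically bounded in $H_0^1(D_R)$.

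The preparatory step is the rearrangement inequality $B(u)\leq B(u^*)$, where $u^*$ is the symmetric decreasing rearrangement of $u$ on $D_R$. Combined with P\'olya--Szeg\H{o} ($\|u^*\|_{L^2}=\|u\|_{L^2}$ and $\|\nabla u^*\|_{L^2}\leq\|\nabla u\|_{L^2}$), this yields $\sup_{t>0}I_R(tu^*)\leq\sup_{t>0}I_R(tu)$, so a minimizing sequence $\{u_n\}\subset\mathcal{N}_R$ can be chosen with each $u_n$ non-negative, radial, and nonincreasing. To prove $B(u)\leq B(u^*)$, let $w,v\in H_0^1(D_R)$ solve $-\Delta w=u^2$ and $-\Delta v=(u^2)^*=(u^*)^2$ respectively, with zero boundary data, so that $w(x)=\tfrac{1}{2\pi}\int_{D_R}G(x,y)u^2(y)\,dy$ and $v(x)=\tfrac{1}{2\pi}\int_{D_R}G(x,y)(u^*)^2(y)\,dy$. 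Talenti's inequality gives $w^*\leq v$ a.e.\ in $D_R$, and the Hardy--Littlewood rearrangement inequality then chains
\begin{equation*}
B(u)=2\pi\!\int_{D_R}\!w\,u^2\,dx \;\leq\; 2\pi\!\int_{D_R}\!w^*\,(u^2)^*\,dx \;\leq\; 2\pi\!\int_{D_R}\!v\,(u^*)^2\,dx \;=\; B(u^*).
\end{equation*}

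Extract a subsequence $u_n\rightharpoonup\phi_R$ weakly in $H_0^1(D_R)$; the Rellich--Kondrachov theorem on the bounded disc yields $u_n\to\phi_R$ strongly in every $L^p(D_R)$ with $p<\infty$ and a.e.\ along a further subsequence, so $\phi_R$ is itself non-negative, radial, and nonincreasing. The pointwise bound $G(x,y)\leq CR^2/|x-y|$ from Lemma 2.1 together with the Hardy--Littlewood--Sobolev inequality gives $B(u_n)\to B(\phi_R)$; since $B(u_n)=\|u_n\|^2\to 4c_R>0$, the limit $\phi_R$ is nontrivial and $B(\phi_R)=4c_R$. Weak lower semicontinuity gives $\|\phi_R\|^2\leq B(\phi_R)$, hence the Nehari projection $t_{\phi_R}\phi_R\in\mathcal{N}_R$ satisfies $t_{\phi_R}\leq 1$ and
\begin{equation*}
c_R \;\leq\; I_R(t_{\phi_R}\phi_R) \;=\; \frac{\|\phi_R\|^4}{4B(\phi_R)} \;\leq\; \frac{\|\phi_R\|^2}{4} \;\leq\; \liminf_n \frac{\|u_n\|^2}{4} \;=\; c_R,
\end{equation*}
forcing equality throughout, so $t_{\phi_R}=1$, $\phi_R\in\mathcal{N}_R$, and $I_R(\phi_R)=c_R$. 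A standard Lagrange multiplier argument on $\mathcal{N}_R$ then upgrades $\phi_R$ to a free critical point of $I_R$; elliptic bootstrap applied to $-\Delta\phi_R+\phi_R=W_R\phi_R$ with the bounded potential $W_R(x):=\int_{D_R}G(x,y)\phi_R^2(y)\,dy$ yields $\phi_R\in C^2(D_R)$; and the strong maximum principle forces $\phi_R>0$ on $D_R$.

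The main technical obstacle is the nonlocal rearrangement inequality $B(u)\leq B(u^*)$: for the free Riesz or logarithmic kernel on $\mathbb{R}^N$ this would follow at once from Riesz's rearrangement inequality, but the Green's function $G$ on the disc is not a convolution kernel, so one must reformulate the double integral as a single Poisson problem and invoke Talenti's inequality in order to compare the two resulting potentials.
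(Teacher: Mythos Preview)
Your proof is correct and relies on the same key ingredients as the paper --- the Nehari fibering, the rearrangement inequality $B(u)\le B(u^*)$ obtained through Talenti's comparison (Lemma~\ref{lemma6}), compactness on the bounded disc, and elliptic regularity plus the strong maximum principle --- but you deploy them in a different order. The paper first invokes the standard Nehari method (citing Willem) to produce some ground state $\phi_R$, then symmetrizes \emph{a posteriori}: it shows $\phi_R^*$ is also a ground state and finally appeals to the equality case in Talenti's inequality to conclude $\phi_R=\phi_R^*$. You instead symmetrize the minimizing sequence \emph{a priori}, so the weak limit is automatically radial and nonincreasing, and then carry out the compactness argument explicitly. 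Your route has two advantages: it sidesteps the somewhat delicate equality case of Talenti entirely, and it makes the existence step self-contained rather than a citation. Conversely, the paper's order has the virtue that once a ground state is in hand by any method, the symmetry conclusion applies to \emph{every} positive ground state (which is what Theorem~\ref{theorem1} actually asserts), whereas your argument as written only produces one radial ground state; the full statement of Theorem~\ref{theorem1} would still require the paper's \emph{a posteriori} symmetrization step. Your explicit derivation of $B(u)\le B(u^*)$ via $w^*\le v$ and Hardy--Littlewood is also more transparent than the paper's brief reference to ``Lemma~\ref{lemma6} and the symmetric rearrangement inequalities,'' and your closing remark correctly identifies why Riesz rearrangement alone does not suffice for the disc Green function.
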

\begin{proof}
Observe that
$$I_R(t_uu)=\sup\limits_{t>0}I_R(tu)=\frac14\frac{\|u\|^4}{\int_{D_R}\int_{D_R}G(x,y)|u(y)|^2|u(x)|^2dxdy}$$
and thereby
$$c_R=\inf_{u\in\mathcal{N}_R}I_R(u)=\inf_{u\in H_0^1(D_R)\setminus\{0\}}\sup_{t>0}I_R(tu).$$
Then by using   standard Nehari manifold method as in \cite[Chapter 4]{Willem}, we can obtain the existence of ground state solutions of \eqref{*1} in $H_0^1(D_R)$.

 We assume that $\phi_R\in H_0^1(D_R)$  is a ground state solution of \eqref{*1}. It suffices to prove the positivity, radial symmetry and decreasing of $\phi_R$.
Notice that $|\phi_R|\in \mathcal{N}_R$ and $I_R(\phi_R)=I_R(|\phi_R|)$. So $|\phi_R|$ is also a ground state solution of \eqref{*1}. Then by applying the standard elliptic regularity theory and the strong maximum principle, we see that $\phi_R\in C^2(D_R)$, and either $\phi_R> $ or $\phi_R< 0$ in $D_R$. Without loss of generality, we assume that $\phi_R>0$ in $D_R$.

Let $\phi_R^*$ be the symmetric decreasing rearrangement of $\phi_R.$ According to Lemma \ref{lemma6} and the  symmetric rearrangement inequalities \cite{Lieb}, we have $\int_{D_R}|\nabla\phi_R^*|^2dx \leq\int_{D_R}|\nabla\phi_R|^2dx$, $\int_{D_R}|\phi_R^*|^2dx=\int_{D_R}|\phi_R|^2dx$ and
\begin{equation}\label{juanjidengshi}
\int_{D_R}\int_{D_R}G(x,y)|\phi_R^*(y)|^2|\phi_R^*(x)|^2dxdy\geq \int_{D_R}\int_{D_R}G(x,y)|\phi_R(y)|^2|\phi_R(x)|^2dxdy.
\end{equation}
This yields that
$$ I_R(\phi_R)\geq I_R(\phi_R^*)\quad\mbox{and}\quad \langle I_R^{\prime}(\phi_R^*),\phi_R^*\rangle\leq 0.$$

We claim that
$\langle I_R^{\prime}(\phi_R^*),\phi_R^*\rangle=0$, that is, $\phi_R^*\in\mathcal{N}_R.$
Otherwise, $\langle I_R^{\prime}(\phi_R^*),\phi_R^*\rangle<0$ and thereby there exists $t_{\phi_R^*}\in(0,1)$ such that $t_{\phi_R^* }\phi_R^* \in \mathcal{N} _R.$
Then
$$\begin{aligned}
c_R\leq I_R(t_{\phi_R^*}\phi_R^*)
&=\frac{1}{4}\frac{\|\phi_R^*\|^4}{\int_{D_R}\int_{D_R}G(x,y)|\phi_R^*(y)|^2|\phi_R^*(x)|^2dxdy}\\
&<\frac{1}{4}\frac{\|\phi_R\|^4}{\int_{D_R}\int_{D_R}G(x,y)|\phi_R(y)|^2|\phi_R(x)|^2dxdy}=I_R(\phi_R)=c_R,
\end{aligned}$$
which is a contradiction. The claim  holds.
Hence, $\phi_R^*$ is a positive ground state solution of \eqref{*1} and $$\int\limits_{D_R}\int\limits_{D_R}G(x,y)|\phi_R^*(y)|^2|\phi_R^*(x)|^2dxdy= \int_{D_R}\int_{D_R}G(x,y)|\phi_R(y)|^2|\phi_R(x)|^2dxdy.$$ This together with  Lemma \ref{lemma6} yields that
$\phi_R^*=\phi_R.$
Thus $\phi_R\in C^2(D_R)\bigcap H_0^1(D_R)$ is a strictly positive, radially symmetric and decreasing. The proof is finished.
\end{proof}
 \noindent {\textbf{Proof of Theorem \ref{theorem1}}}: It is a direct conclusion of  Proposition  \ref{thmexistence}.  \qed

\section{Uniqueness}
Based on the radial symmetry obtained in Theorem \ref{thmexistence}, we are going to prove the uniqueness of the positive ground state solution of \eqref{*1} in this section. From now on, we always assume that $\phi_R$ is a positive radial ground state solution of  \eqref{*1}.

In order to achieve our goal,
we first develop the Newton's theorem for the two dimensional case. Let $S^1$ denote the unit circle in $\mathbb{R}^2$ centered at the origin and ${\mathbb{S}^1_{r,-x}}$ be a circle with radius $r>0$ centered at the point $-x$ in $\mathbb{R}^2$.
\begin{lemma}\label{lemniud}
  For any $x\in\mathbb{R}^2$ and $r>0$,
\begin{equation}\label{niudundenshi}
P(r,x):=\frac{1}{2\pi}\int_{\mathcal{S}^1}\ln \frac{1}{|rz-x|}dz=\min\left\{\ln \frac{1}{r},\ln \frac{1}{|x|}\right\}.
\end{equation}
\end{lemma}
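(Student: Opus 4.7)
The plan is to identify $P(r,\cdot)$ as the logarithmic potential of the uniform probability measure on the circle $r\,S^1$ and to compute it by means of the mean value property of harmonic functions; this is the two-dimensional analogue of the classical Newton shell theorem.

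First, by rotational invariance of $S^1$ the quantity $P(r,x)$ depends only on $|x|$, so I shall write $P(r,x) = p(|x|)$. Next, for each fixed $z \in S^1$ the function $x \mapsto -\ln|rz-x|$ is harmonic on $\mathbb{R}^2 \setminus \{rz\}$. A standard differentiation-under-the-integral argument --- using that $|rz-x|$ is bounded away from $0$ uniformly in $z \in S^1$ when $x$ ranges over a compact subset of $\{|x| \neq r\}$ --- then shows that $P(r,\cdot)$ is harmonic on the open disc $\{|x| < r\}$ and on the exterior $\{|x| > r\}$.

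Since any radial harmonic function on an annulus or a disc in $\mathbb{R}^2$ has the form $a + b\ln|x|$, it remains to pin down these constants on each side. Inside $\{|x| < r\}$, the function $p$ must extend continuously to the origin, and a direct evaluation gives $P(r,0) = \ln(1/r)$; hence the coefficient of $\ln|x|$ must vanish and $P(r,x) \equiv \ln(1/r)$ throughout the open disc. In the exterior $\{|x| > r\}$, the elementary expansion $\ln(1/|rz-x|) = -\ln|x| + O(r/|x|)$ as $|x| \to \infty$, uniform in $z \in S^1$, yields $p(|x|) = -\ln|x| + o(1)$; comparing with $a + b\ln|x|$ forces $a = 0$ and $b = -1$, so $P(r,x) = \ln(1/|x|)$ on $\{|x| > r\}$.

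Finally, the integrand has only an integrable logarithmic singularity as $x$ crosses $|x| = r$, so $P(r,\cdot)$ is continuous on all of $\mathbb{R}^2$, and the two formulas glue at the common value $\ln(1/r)$ on that circle. Assembling the three cases, and noting that $\ln(1/r) \leq \ln(1/|x|)$ if and only if $|x| \leq r$, one obtains the claimed identity $P(r,x) = \min\{\ln(1/r),\,\ln(1/|x|)\}$. The main technical point I expect to watch is the rigorous verification of harmonicity of $P(r,\cdot)$ away from the circle $|x|=r$ (interchanging $\Delta_x$ with the circular integral); but on any compact subset of $\{|x| \neq r\}$ the integrand and its $x$-derivatives up to second order are uniformly bounded in $z \in S^1$, so this is entirely routine.
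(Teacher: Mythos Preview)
Your proof is correct and rests on the same ingredient as the paper's---the harmonicity of $\ln\frac{1}{|\cdot|}$---but the two arguments are organized differently. The paper rewrites $P(r,x)$ as the circular average $\frac{1}{2\pi}\int_{\mathbb{S}^1_{r,-x}}\ln\frac{1}{|y|}\,dy$ and applies the mean value property \emph{directly to the integrand}: when the origin lies outside the disc bounded by $\mathbb{S}^1_{r,-x}$ (i.e.\ $r<|x|$) the average equals the value at the center, $\ln\frac{1}{|x|}$, and the case $r>|x|$ follows by the evident $(r,|x|)$-symmetry of the integrand; the borderline $r=|x|$ is handled by showing the contribution near the single singularity vanishes. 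You instead differentiate under the integral to show that $P(r,\cdot)$ itself is harmonic off the circle $\{|x|=r\}$, then invoke the classification of radial harmonic functions and match constants via $P(r,0)$ and the $|x|\to\infty$ asymptotics. Your route is slightly longer but more self-contained (no need to spot the $(r,|x|)$-symmetry), while the paper's route is a one-line application of the mean value formula once the right change of variables is made; both are standard and equally rigorous.
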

\begin{proof}
  Observe that $P(r,x)$ is radial with respect to $x$. Since $\ln\frac{1}{|x|}$  is a
harmonic function if $x\neq 0$,  by using the mean-value formula, we deduce that
$\frac{1}{2\pi}\int_{\mathcal{S}^1}\ln \frac{1}{|rz-x|}dz=\ln \frac{1}{|r|}$ if $r>|x|$, and $\frac{1}{2\pi}\int_{\mathcal{S}^1}\ln \frac{1}{|rz-x|}dz=\ln \frac{1}{|x|}$ if $r<|x|$.
It suffices to prove the case $r=|x|. $ Notice that $P(r,x)=\frac{1}{2\pi}\int_{\mathbb{S}^1_{r,-x}}\ln \frac{1}{|y|}dy$.
By using polar coordinate formula, there holds that
$\lim\limits_{{\varepsilon}\to0}\frac{1}{2\pi}\int_{\mathbb{S}^1_{r,-x}\bigcap D_{\varepsilon}}\ln \frac{1}{|y|}dy= 0.$
This,  combined with Lebesgue dominated convergence theorem implies that $P(r,x)=\ln\frac{1}{|x|}$. The proof is finished.
\end{proof}

With the help of Lemma \ref{lemniud}, we have the following result, which is a novel point in this paper.
\begin{lemma}\label{lemjuanji}
For any radial function $\varphi\in H^1_0(D_R)$ and $x\in D_R,$ we have
\begin{equation}\label{juanji}\begin{aligned}
 \int_{D_R} G(x,y)|\varphi(y)|^2dy=\int_{|y|\leq |x|}\left(\ln \frac{|y|}{ |x|}\right)|\varphi(y)|^2dy+\int_{D_R}\left(\ln \frac{R}{ |y|}\right)|\varphi(y)|^2dy.
\end{aligned}\end{equation}
\end{lemma}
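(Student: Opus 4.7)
My plan is to pass to polar coordinates: writing $y=r\omega$ with $\omega\in S^1$ and exploiting the radial symmetry of $\varphi$, the identity reduces to computing, for each fixed $r\in(0,R)$, the circular average
$$A(r,x):=\frac{1}{2\pi}\int_{S^1} G(x,r\omega)\,d\omega,$$
and showing that $A(r,x)=\ln\frac{R}{\max(r,|x|)}$. Once this is in hand, I would write
$$\int_{D_R}G(x,y)|\varphi(y)|^2\,dy=2\pi\int_0^R r|\varphi(r)|^2 A(r,x)\,dr,$$
split the $r$-integral at $r=|x|$, and use the decomposition $\ln\frac{R}{|x|}=\ln\frac{R}{r}+\ln\frac{r}{|x|}$ on $(0,|x|)$ to reassemble exactly the two integrals on the right-hand side of \eqref{juanji}.

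To compute $A(r,x)$ I would treat the two summands of $G$ separately. The piece $\ln\frac{1}{|x-r\omega|}$ is handled directly by Lemma \ref{lemniud}, giving $\min\{\ln\frac{1}{r},\ln\frac{1}{|x|}\}=-\ln\max(r,|x|)$. For the second piece the crucial algebraic step is the factorization
$$\left|\frac{Rx}{|x|}-\frac{|x|r\omega}{R}\right|=\frac{r|x|}{R}\left|\omega-\frac{R^2 x}{r|x|^2}\right|,$$
which I would verify by squaring both sides. The logarithm then splits as $\ln\frac{R}{r|x|}+\ln\frac{1}{|\omega-a|}$ with $a:=R^2 x/(r|x|^2)$, and Lemma \ref{lemniud} applied on the unit circle yields $\min\{0,\ln\frac{r|x|}{R^2}\}$. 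Because $r\leq R$ and $|x|\leq R$ force $r|x|\leq R^2$, this minimum is attained by the second argument and equals $\ln\frac{r|x|}{R^2}$; combined with the prefactor $\ln\frac{R}{r|x|}$ it collapses to the remarkably clean constant $-\ln R$, independent of both $r$ and $x$. Subtracting the two averages then delivers $A(r,x)=\ln\frac{R}{\max(r,|x|)}$.

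The main obstacle I foresee is precisely this manipulation of the Kelvin-reflected point in the second summand of $G$: spotting the correct factorization so Lemma \ref{lemniud} can be applied on the unit circle, and then checking that the constraint $r|x|\leq R^2$ selects the right branch of the minimum. Once these are settled, Fubini together with polar coordinates is routine, and the splitting at $r=|x|$ is pure bookkeeping. The boundary case $r=|x|$ has measure zero, so the piecewise nature of the minimum in Lemma \ref{lemniud} causes no difficulty.
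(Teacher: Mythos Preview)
Your proposal is correct and follows essentially the paper's approach: both compute the two pieces of $G$ by circular averaging and Lemma~\ref{lemniud}, with the Kelvin-reflected term collapsing to the constant $-\ln R$. The only difference is cosmetic---the paper averages over the angular part of $x$ (after first invoking the symmetry $G(x,y)=G(y,x)$ to rewrite the reflected term as $\tfrac{|y|}{R}\bigl|x-\tfrac{R^{2}}{|y|^{2}}y\bigr|$), whereas you average over the angular part of $y$ and arrive at the equivalent factorization directly.
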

\begin{proof}
First we can check that $\int_{D_R}\left(\ln \frac{1}{ |x-y|}\right)|\varphi(y)|^2dy$ is   a radial function about $x$ due to the fact that  $\varphi\in H^1_0(D_R)$ is   radial. Then by  applying Lemma \ref{lemniud}, we obtain that

\begin{equation*}\begin{aligned}
\int_{D_R}|\varphi(y)|^2\ln \frac{1}{ |x-y|}dy&=\frac{1}{2\pi}\int_{\mathcal{S}^1}\left[\int_{D_R}|\varphi(y)|^2\ln \frac{1}{ ||x|z-y|}dy\right]dz\\
&=\int_{D_R}\left[\frac{1}{2\pi}\int_{\mathcal{S}^1}\left(\ln \frac{1}{ ||x|z-y|}\right)dz\right]|\varphi(y)|^2dy\\
&=\int_{D_R}\min\left\{\ln\frac{1}{|x|},\ln\frac{1}{|y|}\right\}|\varphi(y)|^2dy\\
&=\int_{|y|\leq |x|}\left(\ln \frac{1}{ |x|}\right)|\varphi(y)|^2dy+\int_{|y|>|x|}\left(\ln \frac{1}{|y|}\right)|\varphi(y)|^2dy.\\
&=\int_{|y|\leq |x|}\left(\ln \frac{1}{ |x|}-\ln \frac{1}{|y|}\right)|\varphi(y)|^2dy+\int_{D_R}\left(\ln \frac{1}{ |y|}\right)|\varphi(y)|^2dy
\end{aligned}\end{equation*}
and
\begin{equation*}\label{gjifen}\begin{aligned}
\int_{D_R}|\varphi(y)|^2\ln\frac{1}{\left|\frac{R x}{|x|}-\frac{|x|y}{R}\right|}dy
=&\int_{D_R}|\varphi(y)|^2\ln\frac{1}{\left|\frac{R y}{|y|}-\frac{|y|x}{R}\right|}dy\\
=&\int_{D_R}|\varphi(y)|^2\ln \frac{\frac{R}{|y|}}{\left|x-\frac{R^2}{|y|^2}y\right|}dy,  \\
=&\int_{D_R}|\varphi(y)|^2\left[\frac{1}{2\pi}\int_{S^1}\ln \frac{\frac{R}{|y|}}{\left||x|z-\frac{R^2}{|y|^2}y\right|}dz\right]dy\\
=&\int_{D_R}|\varphi(y)|^2 \min\left\{\ln\frac{R}{|xy|},\ln \frac{1}{R}\right\}dy\\
=&\int_{|x|\leq\frac{R^2}{|y|}}|\varphi(y)|^2\ln \frac{1}{R}dy +\int_{|x|>\frac{R^2}{|y|}}|\varphi(y)|^2\ln \frac{R}{|xy|}dy\\
=&\int_{D_R}|\varphi(y)|^2\ln \frac{1}{R}dy.
\end{aligned}\end{equation*}
Thus \eqref{juanji} follows immediately.
\end{proof}
For the sake of  convenience, we denote by
$$U_{\phi}(x)=\int_{|y|\leq|x|}\left(\ln\frac{|x|}{|y|}\right)|\phi(y)|^2dy.$$
 We can see that $U_{\phi_R}>0$ is radial, and by using the polar coordinates formula,  $$U_{\phi_R}(x)=\int_0^{|x|}2\pi r\left(\ln\frac{|x|}{r}\right)|\phi_R(r)|^2dr<+\infty.$$
Then according to  Lemma \ref{lemjuanji}, equation \eqref{*1} becomes
\begin{equation}\label{bianhuanfangc}
\left\{\begin{aligned}
-\Delta \phi_R+U_{\phi_R}\phi_R&=\left(\int_{D_R}\left(\ln \frac{R}{ |y|}\right)|\phi_R(y)|^2dy-1\right)\phi_R,  &\mbox{in}\  D_R,\hfill\\
\phi_R&=0,  &\mbox{on}\  \partial D_R.\hfill
\end{aligned}\right.
\end{equation}
Hence it suffices   to prove the uniqueness of the positive ground state solution for \eqref{bianhuanfangc}. To this end, we denote by
\begin{equation}\label{20}
\lambda(\phi_R)=\int_{D_R}\left(\ln \frac{R}{ |y|}\right)|\phi_R(y)|^2dy-1.
\end{equation}
Clearly, $\lambda(\phi_R) \in(0,\infty).$ By taking
$$\mbox{$R^*:=\sqrt{\lambda}R$ \quad and \quad $\varphi_{\lambda}(x)=\frac{1}{\lambda}\phi_R(\frac{x}{\sqrt{\lambda}})$}$$
with $\lambda=\lambda(\phi_R),$ we conclude that $\varphi_{\lambda}$ satisfies
\begin{equation}\label{*2}
\left\{\begin{aligned}
-\Delta \phi+U_{\phi}\phi&=\phi\quad \mbox{in}\,\ D_{R^{\ast}},\hfill\\
\phi&=0\quad \mbox{on}\,\ \partial D_{R^{\ast}}.\hfill
\end{aligned}\right.
\end{equation}

In the following, we first investigate the uniqueness of positive ground state radial solutions of \eqref{*2}. Assume that $\phi\in C^{2}(D_{R^*})\cap H_0^1(D_{R^*})$  is a positive  ground state radial solution of \eqref{*2}. Let
$$\Gamma_{\phi}=\inf \{A_{\phi}(\psi):\|\psi\|_{L^2(D_{R^*})} =1\},$$
where  $A_{\phi}:H_0^1(D_{R^*})\to\mathbb{R}$ is given by
$$A_{\phi}(\psi)=\int_{D_{R^*}}(|\nabla\psi|^2+U_\phi \psi^2)dx.$$

\begin{lemma}\label{lem4.5}
 $\Gamma_{\phi}$ can be achieved by a positive radial function $\hat{\psi}\in H_0^1(D_{R^*})$ and $\Gamma_\phi=1.$
\end{lemma}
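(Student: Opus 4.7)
The plan is to view $\Gamma_\phi$ as the bottom of the Dirichlet spectrum of the linear Schr\"odinger operator $-\Delta+U_\phi$ on $D_{R^{\ast}}$ and to exploit the fact that the given ground state $\phi>0$ already solves this eigenvalue problem with eigenvalue $1$. The positivity and radiality of the minimizer will be handled by a symmetrization argument, and the value $\Gamma_\phi=1$ will follow from spectral considerations (orthogonality of eigenfunctions together with the fact that two positive functions cannot be $L^2$-orthogonal).

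For existence and symmetry of a minimizer, I first note that writing $U_\phi$ in polar coordinates gives $\frac{d}{d\rho}U_\phi(\rho)=\frac{2\pi}{\rho}\int_0^\rho r|\phi(r)|^2\,dr>0$, so $U_\phi$ is radial, continuous, non-negative, and \emph{strictly symmetric increasing} in $|x|$, and in particular $U_\phi\in L^\infty(D_{R^{\ast}})$. Consequently every minimizing sequence $\{\psi_n\}\subset H_0^1(D_{R^{\ast}})$ with $\|\psi_n\|_{L^2}=1$ satisfies $\|\nabla\psi_n\|_{L^2}^2\leq A_\phi(\psi_n)$ and is bounded in $H_0^1$; by Rellich--Kondrachov I extract a subsequence $\psi_n\rightharpoonup\hat\psi$ weakly in $H_0^1$ and strongly in $L^2$, so $\|\hat\psi\|_{L^2}=1$, and the lower semicontinuity of $\|\nabla\cdot\|_{L^2}^2$ together with dominated convergence for $\int U_\phi\psi_n^2\,dx$ gives $A_\phi(\hat\psi)\leq\Gamma_\phi$. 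Replacing $\hat\psi$ by $|\hat\psi|$ preserves $A_\phi$ and $\|\cdot\|_{L^2}$. To enforce radiality I would use the complementary trick $U_\phi=C-(C-U_\phi)$ with $C=\|U_\phi\|_\infty$: since $C-U_\phi\geq 0$ is symmetric decreasing on $D_{R^{\ast}}$, the classical Hardy--Littlewood inequality gives $\int(C-U_\phi)\hat\psi^2\,dx\leq\int(C-U_\phi)(\hat\psi^\ast)^2\,dx$, which (using $\|\hat\psi^\ast\|_{L^2}=\|\hat\psi\|_{L^2}$) rearranges to
\begin{equation*}
\int_{D_{R^{\ast}}} U_\phi\,(\hat\psi^\ast)^2\,dx\leq\int_{D_{R^{\ast}}} U_\phi\,\hat\psi^2\,dx.
\end{equation*}
Combined with the P\'olya--Szeg\H{o} inequality, this shows $\hat\psi^\ast$ is also a minimizer, so I may assume $\hat\psi$ is radial and symmetric decreasing. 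Finally, elliptic regularity applied to the Euler--Lagrange equation $-\Delta\hat\psi+U_\phi\hat\psi=\Gamma_\phi\hat\psi$ and the strong maximum principle promote $\hat\psi\geq 0$ to $\hat\psi>0$ in $D_{R^{\ast}}$.

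To identify $\Gamma_\phi=1$, I would test $A_\phi$ on $\phi$ itself. Multiplying \eqref{*2} by $\phi$ and integrating by parts (using $\phi=0$ on $\partial D_{R^{\ast}}$) yields
\begin{equation*}
A_\phi(\phi)=\int_{D_{R^{\ast}}}\phi(-\Delta\phi+U_\phi\phi)\,dx=\|\phi\|_{L^2}^2,
\end{equation*}
so $A_\phi(\phi/\|\phi\|_{L^2})=1$ and hence $\Gamma_\phi\leq 1$. For the reverse inequality, both $\hat\psi$ and $\phi$ are strictly positive eigenfunctions of the self-adjoint operator $-\Delta+U_\phi$ with Dirichlet boundary conditions on the bounded domain $D_{R^{\ast}}$, corresponding to eigenvalues $\Gamma_\phi$ and $1$ respectively; if $\Gamma_\phi\neq 1$, the orthogonality of distinct eigenspaces would force $\int_{D_{R^{\ast}}}\hat\psi\,\phi\,dx=0$, contradicting the strict positivity of both functions. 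Hence $\Gamma_\phi=1$. The main obstacle I anticipate is the rearrangement step: $U_\phi$ is radial but \emph{increasing}, which is the ``wrong'' monotonicity for a direct Hardy--Littlewood comparison, and the complementary trick $U_\phi=C-(C-U_\phi)$ relies crucially on the $L^\infty$ bound for $U_\phi$, itself a consequence of $\phi$ being bounded on the closed disc $\overline{D_{R^{\ast}}}$.
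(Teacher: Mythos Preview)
Your proof is correct. The paper does not give an explicit argument here, referring instead to \cite[Lemma~4.5]{Guo-Wang-Yi}; your approach---direct method plus Rellich--Kondrachov for existence, the complementary trick $U_\phi=C-(C-U_\phi)$ combined with Hardy--Littlewood and P\'olya--Szeg\H{o} for radiality, and the spectral orthogonality argument (two positive eigenfunctions cannot belong to distinct eigenspaces) for $\Gamma_\phi=1$---is the standard route and is in line with what the cited reference does in the higher-dimensional setting.
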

\begin{proof}
By applying similar arguments as in  \cite[Lemma 4.5]{Guo-Wang-Yi}, the conclusion is true.
\end{proof}
\begin{lemma}\label{lem4.6}
The positive radial solution of \eqref{*2} is unique.
\end{lemma}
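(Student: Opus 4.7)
The plan is to combine the variational characterization in Lemma \ref{lem4.5} with a planar Newton identity for $U_\phi$, and then to close the argument by the contraction mapping principle. Suppose $\phi_1,\phi_2$ are two positive radial solutions of \eqref{*2}. By Lemma \ref{lem4.5}, for each $i$ the function $\phi_i/\|\phi_i\|_{L^2}$ is the unique positive first Dirichlet eigenfunction of $-\Delta+U_{\phi_i}$ with principal eigenvalue $\Gamma_{\phi_i}=1$. If $\phi_2=c\phi_1$ for some $c>0$, then $U_{c\phi_1}=c^2 U_{\phi_1}$, so substituting into \eqref{*2} forces $(c^2-1)U_{\phi_1}\phi_1=0$, whence $c=1$ and $\phi_1=\phi_2$. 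Otherwise, by the simplicity of the first eigenvalue, $\phi_2/\|\phi_2\|_{L^2}$ is not a minimizer of $A_{\phi_1}$ on the $L^2$-unit sphere, so $A_{\phi_1}(\phi_2)>\|\phi_2\|^2_{L^2}$. Combined with the Nehari identity $\int|\nabla\phi_2|^2+\int U_{\phi_2}\phi_2^2=\int\phi_2^2$ obtained by pairing \eqref{*2} with $\phi_2$, this yields
\[
\int_{D_{R^*}}(U_{\phi_1}-U_{\phi_2})\phi_2^2\,dx>0
\]
and symmetrically $\int(U_{\phi_1}-U_{\phi_2})\phi_1^2\,dx<0$.

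Next I would convert these strict weighted inequalities into a statement about $W:=U_{\phi_1}-U_{\phi_2}$. Direct polar-coordinate differentiation of the representation in Lemma \ref{lemjuanji} gives the planar Newton identity
\[
\Delta U_\phi=2\pi\phi^2\text{ in }D_{R^*},\qquad U_\phi(0)=U_\phi'(0)=0,\qquad U_\phi'(R^*)=\|\phi\|_{L^2}^2/R^*.
\]
Substituting $\phi_i^2=\Delta U_{\phi_i}/(2\pi)$ into the two strict inequalities and integrating by parts converts them into strict quantitative comparisons between $\int_{D_{R^*}}|\nabla W|^2\,dx$ and the boundary flux $W(R^*)W'(R^*)$, providing the precise integral estimate needed to drive the subsequent contraction argument.

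The contraction mapping principle is then invoked as follows. Define $T(\phi):=\psi_\phi$, where $\psi_\phi$ is the unique positive $L^2$-normalized first Dirichlet eigenfunction of $-\Delta+U_\phi$, rescaled via the Nehari relation so that fixed points of $T$ are precisely positive radial solutions of \eqref{*2}. Using the sharp bound $G(x,y)\le CR^2/|x-y|$ from \eqref{gxingzhi} together with the Hardy--Littlewood--Sobolev estimate \eqref{gxingzhi1} and the boundary identity above, I would establish a Lipschitz-type estimate of the form $\|U_{\phi_a}-U_{\phi_b}\|\le C\|\phi_a+\phi_b\|\,\|\phi_a-\phi_b\|$, and deduce that $T$ is a strict contraction on a suitable closed ball in a weighted $H^1$-topology of positive radial functions. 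Banach's fixed-point theorem then gives uniqueness of the positive radial solution of \eqref{*2}.

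The main obstacle is controlling the boundary flux when closing the contraction estimate, since the logarithmic kernel is sign-indefinite and $U_\phi$ does not vanish on $\partial D_{R^*}$, in contrast with the higher-dimensional setting. I expect this will require combining the strict monotonicity $U_\phi'(r)>0$ with the explicit boundary identity $U_\phi'(R^*)=\|\phi\|^2_{L^2}/R^*$ to absorb the boundary term into the interior gradient energy, mirroring but extending the higher-dimensional argument in \cite{Guo-Wang-Yi}. Once the boundary term is tamed, the contraction estimate follows and the proof is complete.
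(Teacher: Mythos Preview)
Your opening moves are sound: if $\phi_1,\phi_2$ are not proportional, simplicity of the first eigenvalue together with Lemma~\ref{lem4.5} does give the pair of strict inequalities
\[
\int_{D_{R^*}}(U_{\phi_1}-U_{\phi_2})\phi_2^2\,dx>0,\qquad \int_{D_{R^*}}(U_{\phi_1}-U_{\phi_2})\phi_1^2\,dx<0.
\]
But these two inequalities do not by themselves yield a contradiction, and the rest of your outline does not close the gap. The integration-by-parts manipulation you describe produces cross terms like $\int\nabla W\cdot\nabla U_{\phi_i}$, not $\int|\nabla W|^2$, so no usable coercive quantity drops out. More seriously, the map $T$ you propose --- send $\phi$ to a rescaled first eigenfunction of $-\Delta+U_\phi$ --- is not shown to be a contraction: a Lipschitz bound $\|U_{\phi_a}-U_{\phi_b}\|\le C\|\phi_a+\phi_b\|\,\|\phi_a-\phi_b\|$ is useless for Banach's theorem unless you can force the effective constant below $1$, and nothing in your argument does this. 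You yourself flag the boundary-flux term as an unresolved ``main obstacle''; that is precisely where the proof currently fails. The phrase ``contraction mapping principle'' in the paper's introduction refers to a \emph{local} ODE argument, not a global fixed-point scheme on $H^1$.

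The paper's proof is quite different and avoids any global contraction. It sets $\psi=\phi_1-\phi_2$ and runs a trichotomy on its sign pattern. If $\psi$ has a sign on all of $[0,R^*)$ (case~(i)), then $U_{\phi_1}-U_{\phi_2}$ has a strict pointwise sign, and plugging $\phi_1$ into $A_{\phi_2}$ gives $\|\phi_1\|_{L^2}^2\le A_{\phi_2}(\phi_1)<A_{\phi_1}(\phi_1)=\|\phi_1\|_{L^2}^2$, a contradiction. If $\psi$ has a sign on an initial interval $[0,R_1]$ with $\psi(R_1)=0$ (case~(ii)), one truncates $\psi$ at $R_1$, tests the equation for $\frac12(\phi_1+\phi_2)$ against this truncation, and again uses $A_{\phi_i}(\cdot)\ge\|\cdot\|_{L^2}^2$ together with the strict sign of $(U_{\phi_1}-U_{\phi_2})(\phi_1+\phi_2)\tilde\psi$ on $D_{R_1}$. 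The remaining case~(iii), where $\psi\equiv0$ on $[0,R_2]$ and oscillates immediately after, is handled by the radial ODE: variation of constants plus the elementary bounds $U_{\phi_i}(r)\le Cr$ and $|U_{\phi_1}-U_{\phi_2}|\le C\sup|\phi_1-\phi_2|$ give $|\phi_1-\phi_2|\le C_\varepsilon\sup|\phi_1-\phi_2|$ on $(R_2,R_2+\varepsilon)$ with $C_\varepsilon<1$, forcing $\psi\equiv0$ there. This local contraction is the only place a contraction idea enters, and it needs no boundary-flux control.
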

\begin{proof}
We shall prove it by contradiction. Suppose on the contrary that there are two different positive radial solutions
$\phi_{1}, \phi_{2}\in C^{2}(D_{R^{\ast}})\cap H_0^1(D_{R^*})$ with $\phi_{1}\neq\phi_{2}.$ Then $\phi_i$ with $i=1,2$ satisfies
\begin{equation}\label{*3}
\left\{\begin{aligned}
&\phi_i''+\frac{1}{r}\phi_i'=(U_{\phi_i}-1)\phi_i,\quad r\in[0,R^{\ast}),\\
&\phi_i(R^*)=0,\quad \phi_i'(0)=0.
\end{aligned}\right.
\end{equation}
Let
$$\psi=\phi_{1}-\phi_{2}.$$
Then $\psi\nequiv 0$  and there are three cases may happen:
\begin{description}
  \item[(i)] either $\psi\geq0 \ \ in \ \ [0,R^{\ast})$ or $\psi\leq0 \ \ in \ \ [0,R^{\ast})$;
  \item[(ii)] there is $R_1\in(0,R^*)$ such that $\psi(R_{1})=0$, and either $\psi\geq 0$ or $\psi\leq 0$ in $[0,R_{1}]$;
  \item[(iii)] there is $R_2\in(0,R^*)$ such that $\psi\equiv 0$ in $[0,R_{2}]$ and $\psi$ changes sign in $(R _{2},R_{2}+\varepsilon)$ for any $\varepsilon>0$.
\end{description}

If \textbf{(i)} happens, we assume that $\psi\geq 0$ in $[0,R^{\ast})$. The other case $\psi\leq0$ can be discussed  similarly. Notice that
$U_{\phi_{2}}<U_{\phi_{1}}$
and by Lemma \ref{lem4.5}, we have
$$A_{\phi_{1}}(\varphi)\geq \|\varphi\|^{2}_{L^{2}(D_{R^{\ast}})}\quad \mbox{ and }\quad \ A_{\phi_{2}}(\varphi)\geq \|\varphi\|^{2}_{L^{2}(D_{R^{\ast}})}\quad \mbox{for all}\  \varphi \in H_{0}^{1}(D_{R^{\ast}}).$$
Hence,
$$ \|\phi_{1}\|_{L^{2}(D_{R^{\ast}})}^{2}\leq A_{\phi_{2}}(\phi_{1})=A_{\phi_{1}}(\phi_{1})+\int_{D_{R^{\ast}}}(U_{\phi_{2}}-U_{\phi_{1}})\phi_{1}^{2}<\|\phi_{1}\|_{L^{2}(D_{R^{\ast}})}^{2},$$
which leads a contradiction. Hence \textbf{(i)} will not happen.

If \textbf{(ii)} happens, set
\begin{equation*}
\tilde{\psi}(x)=\left\{\begin{aligned}
\psi(x),\quad &\mbox{if}\,\ x\in D_{R_{1}},\\
0,\quad &\mbox{if}\, x\in D_{R^{\ast}}\backslash D_{R_{1}}.
\end{aligned}\right.\end{equation*}
Then   by direct computations, we deduce  from \eqref{*2} that $$\left(-\Delta+\frac{1}{2}(U_{\phi_{1}}+U_{\phi_{2}})\right)\tilde{\psi}=\tilde{\psi}-\frac{U_{\phi_{1}}-U_{\phi_{2}}}{2}(\phi_{1}+\phi_{2}) \ \ \mbox{in }\ \ D_{R_{1}}.$$
Multiplying both sides by $\tilde{\psi}$ and integrating over $D_{R^{\ast}}$, we have
$$\|\tilde{\psi}\|^{2}_{L^{2}(D_{R^{\ast}})}\leq
\frac{1}{2}A_{\phi_{1}}(\tilde{\psi})+\frac{1}{2}A_{\phi_{2}}(\tilde{\psi})=\|\tilde{\psi}\|^{2}_{L^{2}(D_{R^{\ast}})}-\int_{D_{R^{\ast}}}
\frac{U_{\phi_{1}}-U_{\phi_{2}}}{2}(\phi_{1}+\phi_{2})\tilde{\psi}
<\|\tilde{\psi}\|^{2}_{L^{2}(D_{R^{\ast}})}.
$$
This leads a contradiction. Hence \textbf{(ii)} will not happen.

If \textbf{(iii)} happens, we obtain
$\phi_1(R_2)=\phi_2(R_2), \ \phi_1^{'}(R_2)=\phi_2^{'}(R_2).$ Then by applying the variation of constants formula to \eqref{*3}, we have
\begin{equation}\label{jifendengshi}
\phi_{1}(r)-\phi_{2}(r)=T(r,\phi_{1})-T(r,\phi_{2}),\quad \mbox{for any } r\in(R_2,R^*),
\end{equation}
where $$T(r,\phi_{i})=\ln r\int_{R_{2}}^{r}s(U_{\phi_{i}}(s)-1)\phi_{i}(s)ds+\int_{R_{2}}^{r}(\ln\frac{1}{s}) s(U_{\phi_{i}}(s)-1)\phi_{i}(s)ds.$$
Observe that for any $r\in(R_2,R_2+\varepsilon)$,
\begin{equation}\label{uxing1}\begin{aligned}
U_{\phi_i}(r)=2\pi\int_{0}^{r}\left(\ln\frac{r}{s}\right)s|\phi_i(s)|^2ds\leq Cr
\end{aligned}\end{equation}
and
\begin{equation}\label{uxing2}\begin{aligned} |U_{\phi_{1}}(r)-U_{\phi_{2}}(r)|
&\leq2\pi\int_{0}^{r}\left(\ln\frac{r}{s}\right)s|\phi_{1}(s)-\phi_{2}(s)||\phi_{1}(s)+\phi_{2}(s)|ds\\
&=2\pi\int_{R_{2}}^{r}\left(\ln\frac{r}{s}\right)s|\phi_{1}(s)-\phi_{2}(s)||\phi_{1}(s)+\phi_{2}(s)|ds\\
&\leq C \sup_{(R_{2},R_{2}+\varepsilon)}|\phi_{1}(s)-\phi_{2}(s)|
\end{aligned}\end{equation}
for some constant $C>0.$ Then we conclude from \eqref{jifendengshi}-\eqref{uxing2} that for $\epsilon>0$ sufficiently small, there is $C_{\epsilon}\in(0,1/2)$ such that for any $r\in(R_2,R_2+\varepsilon)$,
$$\begin{aligned}
&|\phi_{1}(r)-\phi_{2}(r)|=|T(r,\phi_{1})-T(r,\phi_{2})|\\
\leq&\left|\int_{R_{2}}^{r}(\ln \frac{1}{s}) s\left(U_{\phi_{1}}\phi_{1}-U_{\phi_{2}}\phi_{2}+\phi_{2}-\phi_{1}\right)ds\right|+
\left|\ln r\int_{R_{2}}^{r}s\left(U_{\phi_{1}}\phi_{1}-U_{\phi_{2}}\phi_{2}+\phi_{2}-\phi_{1}\right)ds\right|\\
=&\left|\int_{R_{2}}^{r}(\ln \frac{1}{s}) s\left(U_{\phi_{1}}(\phi_{1}-\phi_{2})+(U_{\phi_{1}}-U_{\phi_{2}})\phi_{2}+\phi_{2}-\phi_{1}\right)ds\right|\\
&+\left|\int_{R_{2}}^{r}(\ln  {r}) s\left(U_{\phi_{1}}(\phi_{1}-\phi_{2})+(U_{\phi_{1}}-U_{\phi_{2}})\phi_{2}+\phi_{2}-\phi_{1}\right)ds\right|\\
\leq& C_{\varepsilon} \sup_{(R_{2},R_{2}+\varepsilon)}|\phi_{1}(s)-\phi_{2}(s)|.
\end{aligned}$$
This leads to a contradiction with $\psi\nequiv 0$ in $(R_2,R_2+\epsilon).$  Hence \textbf{(iii)} will not happen.

From all the above arguments,  the proof is completed.
\end{proof}

\noindent {\textbf{Proof of Theorem \ref{th4.1}}}:
Suppose that \eqref{*1} admits two positive radial ground state solutions $\varphi_1,\varphi_2\in H_0^1(D_R).$
Then either
\begin{description}
  \item[Case 1:] $\lambda(\varphi_1)=\lambda(\varphi_2)$ or
  \item[Case 2:] $\lambda(\varphi_1)\neq\lambda(\varphi_2).$
\end{description}

If \textbf{Case 1} happens, then by the scaling, \eqref{*1} is reduced to \eqref{*2}. According to Lemma \ref{lem4.6}, we obtain  $\varphi_1\equiv \varphi_2$. So the proof is finished.

If \textbf{Case 2} happens, without loss of generality, we may assume $\lambda(\varphi_1)>\lambda(\varphi_2).$ The other case $\lambda(\varphi_1)<\lambda(\varphi_2)$ can be treated similarly.  We set
$\tilde{\lambda}=\frac{\lambda(\varphi_1)}{\lambda(\varphi_2)}>1$
and $\tilde{\varphi}_2(x)=\tilde{\lambda}\varphi_2\left(\sqrt{\tilde{\lambda}}x\right).$ Then $\tilde{\varphi}_2$ satisfies
\begin{equation*}\left\{\begin{aligned}
-\Delta \tilde{\varphi}_2+U_{\tilde{\varphi}_2}\tilde{\varphi}_2
&=\lambda(\varphi_1)\tilde{\varphi}_2\,&\mbox{ in } D_{R/\sqrt{\tilde{\lambda}}},\\
\tilde{\varphi}_2&=0\,&\mbox{ on } \partial D_{R/\sqrt{\tilde{\lambda}}}.
\end{aligned}\right.\end{equation*}
In other words, $\tilde{\varphi}_2$ satisfies
\begin{equation}\label{tldf2}\left\{\begin{aligned}
&-\tilde{\varphi}_2''-\frac{1}{r}\tilde{\varphi}_2'+U_{\tilde{\varphi}_2}\tilde{\varphi}_2
=\lambda(\varphi_1)\tilde{\varphi}_2\,\mbox{ for } r\in(0,R/\sqrt{\tilde{\lambda}}),\\
&\tilde{\varphi}_2(R/\sqrt{\tilde{\lambda}})=0\mbox{ and } \tilde{\varphi}_2'(0)=0.
\end{aligned}\right.\end{equation}
Recall that $\varphi_1$ satisfies
\begin{equation}\label{tldf3}\left\{\begin{aligned}
&-\varphi_1''-\frac{1}{r}\varphi_1'+U_{\varphi_1}\varphi_1
=\lambda(\varphi_1)\varphi_1\,\mbox{ for } r\in(0,R),\\
&\varphi_1(R)=0\mbox{ and } \varphi_1'(0)=0.
\end{aligned}\right.\end{equation}
Then by multiplying \eqref{tldf2} and \eqref{tldf3} by $\varphi_1, \tilde{\varphi}_2$, respectively and integrating by parts, we get that for any $r\in(0,R/\sqrt{\tilde{\lambda}}),$
$$r\tilde{\varphi}'_2(r)\varphi_1(r)-\int^r_0 \tilde{\varphi}'_2s\varphi'_1 ds =\int^r_0(U_{\tilde{\varphi}_2}-\lambda(\varphi_1))\tilde{\varphi}_2\varphi_1s ds,$$
$$r\varphi'_1(r)\tilde{\varphi}_2(r)-\int^r_0 \varphi'_1s\tilde{\varphi}'_2 ds =\int^r_0(U_{\varphi_1}-\lambda(\varphi_1))\varphi_1\tilde{\varphi}_2s ds.$$
 Then
\begin{equation}\label{xiangjianshizi}
\int^r_0(U_{\varphi_1}-U_{\tilde{\varphi}_2})\varphi_1\tilde{\varphi}_2s ds=r\left(\varphi'_1(r)\tilde{\varphi}_2(r)-\tilde{\varphi}'_2(r)\varphi_1(r)\right)= r\tilde{\varphi}_2^2\left(\frac{\varphi_1}{\tilde{\varphi}_2}  \right)'.
\end{equation}

We assert that $\varphi_1(0)=\tilde{\varphi}_2(0)$.  Firstly, we show that $\varphi_1(0)<\tilde{\varphi}_2(0)$ does not happen. Otherwise, we  claim
\begin{equation}\label{duanyan1}
\varphi_1(r)<\tilde{\varphi}_2(r)\quad\mbox{ for any }\, r\in(0,R/\sqrt{\tilde{\lambda}}).
\end{equation}
Indeed, suppose on the contrary that there is $r^*\in(0,R/\sqrt{\tilde{\lambda}})$ such that $\varphi_1(r^*)=\tilde{\varphi}_2(r^*)$ and $\varphi_1(r)<\tilde{\varphi}_2(r)$ for any $r\in(0,r^*)$, that is, $\left(\frac{\varphi_1}{\tilde{\varphi}_2}  \right)(r)<1$.  Then $U_{\varphi_1}<U_{\tilde{\varphi}_2}$ in $(0,r^*)$ and thereby $\left(\frac{\varphi_1}{\tilde{\varphi}_2}  \right)'(r^*)<0$ due to \eqref{xiangjianshizi}. This implies that there exists $\delta\in (0,r^*)$  sufficiently small enough such that $\left(\frac{\varphi_1}{\tilde{\varphi}_2}\right) (r^*-\delta)>1$, which yields a contradiction. Hence the claim \eqref{duanyan1} follows. This claim contradicts with   $\varphi_1(R/\sqrt{\tilde{\lambda}})>0=\tilde{\varphi}_2(R/\sqrt{\tilde{\lambda}})$. We are done.

Secondly, we show that $\varphi_1(0)>\tilde{\varphi}_2(0)$ does not happen. In fact,  by applying  similar arguments as in \eqref{duanyan1}, we can show that $\varphi_1(r)>\tilde{\varphi}_2(r)$ for any $r\in (0,R/\sqrt{\tilde{\lambda}}).$ We regard $\tilde{\varphi}_2$ as a function in $H_0^1(D_R)$ by zero extension outside of $D_{R/\sqrt{\tilde{\lambda}}}.$ Then
\begin{equation}\label{*4}
\int_{D_R}\int_{D_R}G(x,y)|\varphi_1(x)|^2|\varphi_1(y)|^2 dxdy>\int_{D_R}\int_{D_R}G(x,y)|\tilde{\varphi}_2(x)|^2|\tilde{\varphi}^2_2(y)|^2 dxdy>0.
\end{equation}
 By \eqref{juanji} in Lemma \ref{lemjuanji}, a direct computation gives that
\begin{equation*}\begin{aligned}
&\int_{D_R}\int_{D_R}G(x,y)|\tilde{\varphi}_2(x)|^2|\tilde{\varphi}^2_2(y)|^2 dxdy\\
=&\int_{D_R}\int_{D_R}\left(\ln\frac{1}{|y-x|}\right)|\tilde{\varphi}^2_2(y)|^2|\tilde{\varphi}_2(x)|^2 dxdy-\int_{D_R}\int_{D_R}\left(\ln\frac{1}{R}\right)|\tilde{\varphi}_2(x)|^2|\tilde{\varphi}^2_2(y)|^2 dxdy\\
=&\int_{D_{R/\sqrt{\tilde{\lambda}}}}\int_{D_{R/\sqrt{\tilde{\lambda}}}}\left(\ln\frac{1}{|y-x|}\right)|\tilde{\varphi}^2_2(y)|^2|\tilde{\varphi}_2(x)|^2 dxdy-\int_{D_{R/\sqrt{\tilde{\lambda}}}}\int_{D_{R/\sqrt{\tilde{\lambda}}}}\left(\ln\frac{1}{R}\right)|\tilde{\varphi}_2(x)|^2|\tilde{\varphi}^2_2(y)|^2 dxdy\\
=&\tilde{\lambda}^{2}\int_{D_R}\int_{D_R}
\left(\ln\frac{\sqrt{\tilde{\lambda}}}{|y-x|}\right)|{\varphi}_2(y)|^2|{\varphi}_2(x)|^2dxdy -\tilde{\lambda}^{2}\left(\ln\frac{1}{R}\right)\int_{D_R}\int_{D_R}
|{\varphi}_2(y)|^2|{\varphi}_2(x)|^2 dxdy\\
=&\tilde{\lambda}^{2}\left[\int_{D_R}\int_{D_R}
\left(\ln\frac{R}{|y-x|}\right)|{\varphi}_2(y)|^2|{\varphi}_2(x)|^2dxdy+\int_{D_R}\int_{D_R}\left(\ln\sqrt{\tilde{\lambda}}\right)|{\varphi}_2(y)|^2|{\varphi}_2(x)|^2dxdy\right]\\
>&\int_{D_R}\int_{D_R}
\left(\ln\frac{R}{|y-x|}\right)|{\varphi}_2(y)|^2|{\varphi}_2(x)|^2dxdy\\
=&\int_{D_R}\int_{D_R}\left(\ln\frac{1}{|y-x|}\right)|{\varphi}_2(y)|^2|{\varphi}_2(x)|^2 dxdy-\int_{D_R}\int_{D_R}\left(\ln\frac{1}{R}\right)|{\varphi}_2(x)|^2|{\varphi}_2(y)|^2 dxdy\\
=&\int_{D_R}\int_{D_R}
G(x,y)|{\varphi}_2(y)|^2|{\varphi}_2(x)|^2dxdy.
\end{aligned}\end{equation*}
This together with \eqref{*4} yields
$$\int_{D_R}\int_{D_R}G(x,y)|\varphi_1(x)|^2|\varphi_1(y)|^2 dxdy>\int_{D_R}\int_{D_R}G(x,y)|{\varphi}_2(x)|^2|{\varphi}_2(y)|^2 dxdy.$$
However, according to Theorem \ref{thmexistence}, we have
$$c_R=\frac{1}{4}\int_{D_R}\int_{D_R}G(x,y)|\varphi_1(x)|^2|\varphi_1(y)|^2 dxdy=\frac{1}{4}\int_{D_R}\int_{D_R}G(x,y)|{\varphi}_2(x)|^2|{\varphi}_2(y)|^2 dxdy,$$ which leads to a contradiction.

Therefore, the above arguments show that  $\varphi_1(0)=\tilde{\varphi}_2(0).$
  So  $\varphi_1(0)=\varphi_2(0)$ and $\varphi'_1(0)=\varphi'_2(0)$.
  By using similar arguments as in Lemma \ref{lem4.6} case \textbf{(iii)} with some necessary modifications, we obtain that $\varphi_1\equiv \varphi_2$  and the proof is completed. \qed

\section{Convergence}
In this section, we are devoted to proving the  convergence of the unique positive ground state solution $\phi_R$ of \eqref{*1} as $R\to\infty$.
Different from the high dimensional case   $N\geq3$ (see  \cite{Guo-Wang-Yi}), we have the following lemma.
\begin{prop}\label{Prop1}
  $\lim\limits _{R\to\infty}c_R=0$.
\end{prop}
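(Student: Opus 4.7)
The plan is to exploit the variational characterization
$$c_R = \tfrac{1}{4}\inf_{u\in H_0^1(D_R)\setminus\{0\}}\frac{\|u\|^4}{D_R(u)}, \qquad D_R(u):=\int_{D_R}\int_{D_R}G(x,y)|u(x)|^2|u(y)|^2dxdy,$$
together with a fixed compactly supported test function whose ratio vanishes as $R\to\infty$. First I note $c_R\geq 0$: if $\phi_R\in\mathcal{N}_R$ realizes $c_R$, then $\langle I_R'(\phi_R),\phi_R\rangle=0$ gives $D_R(\phi_R)=\|\phi_R\|^2$, so $c_R=I_R(\phi_R)=\tfrac14\|\phi_R\|^2\geq 0$. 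Consequently it suffices to exhibit an $R$-dependent upper bound on $c_R$ that tends to zero.

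Next I fix any nonzero $u_0\in C_c^\infty(D_{r_0})$ for some $r_0>0$. For each $R>r_0$, view $u_0\in H_0^1(D_R)$ by zero extension, so that $\|u_0\|$ and $\|u_0\|_{L^2}$ are independent of $R$. Proposition \ref{thmexistence} then gives
$$0\leq c_R\leq\sup_{t>0}I_R(tu_0)=\frac{\|u_0\|^4}{4D_R(u_0)},$$
so the problem reduces to showing $D_R(u_0)\to\infty$ as $R\to\infty$.

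The heart of the argument is an asymptotic expansion of $G(x,y)$ on compact subsets of $D_R$. Starting from the identity in \eqref{gxingzhi},
$$G(x,y)=\ln\frac{\sqrt{R^{4}+|x|^{2}|y|^{2}-2R^{2}x\cdot y}}{R|x-y|}=\ln R+\ln\frac{1}{|x-y|}+\tfrac{1}{2}\ln\Bigl(1-\tfrac{2x\cdot y}{R^{2}}+\tfrac{|x|^{2}|y|^{2}}{R^{4}}\Bigr),$$
and the last term is $O(R^{-2})$ uniformly for $x,y\in\overline{D_{r_0}}$ with $x\neq y$. Since the logarithmic singularity $\ln(1/|x-y|)$ is integrable on $D_{r_0}\times D_{r_0}$, integrating against $|u_0(x)|^2|u_0(y)|^2$ yields
$$D_R(u_0)=(\ln R)\|u_0\|_{L^2}^{4}+C_{u_0}+O(R^{-2})\longrightarrow+\infty,$$
for some constant $C_{u_0}$ independent of $R$. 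Plugging into the upper bound above gives $c_R\leq \|u_0\|^4/[4D_R(u_0)]\to 0$, as desired.

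The main conceptual obstacle is that the logarithmic kernel is sign-changing and unbounded, so one cannot compare $D_R(u_0)$ to a positive kernel integral as in the higher dimensional case. The resolution is that, when the test function is held fixed inside a small disc while the domain grows, the Green's function $G(x,y)$ develops a clean "background" contribution $\ln R$ that dominates the bounded interaction $\int\int\ln(1/|x-y|)|u_0|^2|u_0|^2$. This divergence of the nonlocal term (which has no analogue for the Riesz kernel used in \cite{Guo-Wang-Yi}) is exactly what forces $c_R\to 0$, hence $\phi_R\to 0$, in sharp contrast with the higher-dimensional convergence to a nontrivial limit.
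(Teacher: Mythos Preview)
Your proof is correct and follows essentially the same approach as the paper: both fix a compactly supported test function $u_0$, invoke the bound $c_R\le \|u_0\|^4/(4D_R(u_0))$, and show $D_R(u_0)\to\infty$ by isolating the $\ln R$ contribution in $G$. The only cosmetic difference is that the paper takes $u_0$ radial so as to use the exact identity $\int_{D_R}G(x,y)|u_0(y)|^2\,dy=\int_{D_R}\ln\bigl(R/|x-y|\bigr)|u_0(y)|^2\,dy$ coming from Lemma~\ref{lemjuanji}, whereas you recover the same leading term via the asymptotic expansion $G(x,y)=\ln R+\ln(1/|x-y|)+O(R^{-2})$ on the fixed support.
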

\begin{proof}
Take  a    radial cut-off function $\Psi\in C_{c}^{\infty}(D_1)$, where $D_1$ denotes the disc centered at the origin with radius $1.$  Here $\Psi$ can be regarded  as a function in $H_0^1(D_R)$ by zero extension outside of $D_1.$ Then for any $R>1$, there exists $t_{R}>0$ such that $t_{R}\Psi\in \mathcal{ N}_{R}$, and $t_{R}$ satisfies
  $$\begin{aligned}
t_{R}^{-2}\int_{D_R} (|\nabla \Psi|^{2}+|\Psi|^{2})
&= \int_{D_R}\int_{D_R}G(x,y)|\Psi(x)|^2|\Psi(y)|^2dxdy \\
&=\int_{D_R}\int_{D_R}(\ln\frac{R}{|x-y|})|\Psi(x)|^2|\Psi(y)|^2dxdy \\
&=\int_{D_1}\int_{D_1}(\ln\frac{R}{|x-y|})|\Psi(x)|^2|\Psi(y)|^2dxdy \\
&\to +\infty, \quad \mbox{ as } R\to\infty.
\end{aligned}$$
This implies $t_{R}\to 0_+$ as $R\to\infty$. Since
 $$c_{R}=\inf_{u\in \mathcal{N}_R} I_{R}(u)\leq I_{R}(t_R \Psi)=\frac{1}{4}\int_{D_R}t_{R}^{2}\left(|\nabla\Psi|^{2}+\Psi^{2}\right)=\frac{1}{4}\int_{D_1}t_{R}^{2}\left(|\nabla\Psi|^{2}+\Psi^{2}\right),$$
 it follows immediately that $\lim\limits_{R\to\infty}c_{R}=0$. The proof is finished.
\end{proof}
As a consequence of Proposition \ref{Prop1}, the following  convergence result holds.

\noindent {\textbf{Proof of Theorem \ref{theorem3}}}:
   In view of Proposition \ref{Prop1}, we have $\frac{1}{4}\int_{D_R}(|\nabla\phi_R|^2+\phi_R^2)=c_R\to0$ as $R\to \infty$. Then $\phi_R\to 0$ strongly in $H_0^1(\mathbb{R}^2)$.   Obviously, $0$ is the trivial solution of the limit equation \eqref{1.2}. The conclusion follows. \qed

\noindent \textbf{Conflict of interest.} The authors state no conflict of interest.

\noindent \textbf{Ethics approval.} No data sets were collected from human subjects during the study of the manuscript.

\noindent \textbf{Funding.} Tao Wang was supported by  the Natural Science Foundation of Hunan Province (Grant No. 2022JJ30235).
Hui Guo was supported by Scientific Research Fund of Hunan Provincial Education Department (Grant No. 22B0484) and Natural Science Foundation of Hunan Province (Grant No. 2024JJ5214).

\noindent \textbf{Data availability.} This manuscript has no associated data.

\vspace{1.0em}

\end{document}